\theoremstyle{plain}
\newtheorem{theorem}{Theorem}[section]
\newtheorem{cor}[theorem]{Corollary}
\newtheorem{lem}[theorem]{Lemma}
\newtheorem{prop}[theorem]{Proposition}
\theoremstyle{definition}
\newtheorem{defn}[theorem]{Definition}
\newcommand{\tl}{\triangleleft}
\newcommand{\tr}{\triangleright}
\newcommand{\twc}{\ast}
\newcommand{\s}{\underline{s}}
\newcommand{\Ss}{\underline{S}}
\newcommand{\twist}{\mathcal{I}}
\newcommand{\invol}{I_{2n}}
\newcommand{\Sn}{\mathcal{S}_n}
\newcommand{\iotint}[2]{[#1,#2]_\iota}
\newcommand{\twint}[2]{[#1,#2]_\mathcal{I}}
\newcommand{\trp}[2]{(#1\,\,\,#2)}
\newcommand{\br}{\mathrm{Br}}
\newcommand{\R}{\mathbb{R}}
\newcommand{\C}{\mathbb{C}}
\newcommand{\DR}{D_{\mathrm{R}}}
\newcommand{\SL}{\mathrm{SL}(2n,\C)}
\newcommand{\Sp}{\mathrm{Sp}(2n,\C)}
\newcommand{\KLP}{\mathcal{P}}
\title[Combinatorial invariance of KLV polynomials]{Combinatorial invariance of Kazhdan-Lusztig-Vogan polynomials
  for fixed point free involutions}
\author[Nancy Abdallah]{Nancy
  Abdallah}
\author[Axel Hultman]{Axel Hultman}
\address{Department of Mathematics, Link\"oping University, SE-581 83 Link\"oping, Sweden.}
\email{nancy.abdallah@liu.se, axel.hultman@liu.se}
\begin{document}
\begin{abstract}
When $\Sp$ acts on the flag variety of
$\SL$, the orbits are in bijection with fixed point free
involutions in the symmetric group $S_{2n}$. In this case, the associated
Kazhdan-Lusztig-Vogan polynomials $P_{v,u}$ can be indexed by pairs of fixed point
free involutions $v\ge u$, where $\ge$ denotes the Bruhat order on
$S_{2n}$. We prove that these polynomials are combinatorial invariants in the
sense that if $f:[u,w_0]\to [u',w_0]$ is a poset isomorphism of upper
intervals in the Bruhat order on fixed point free involutions, then
$P_{v,u} = P_{f(v),u'}$ for all $v\ge u$.
\end{abstract}

\maketitle

\section{Introduction}
Let $(W,S)$ be a Coxeter system with $W$ ordered by the Bruhat order. Every interval $[u,v]$ in
this poset comes with an associated Kazhdan-Lusztig (KL) polynomial $\KLP_{u,v}$,
introduced in \cite{KL}. When
$W$ is a Weyl group, the polynomials carry detailed information about
the singularities of the Schubert varieties indexed by $W$ \cite{KL2}. Evaluated
at $1$, they provide composition factor multiplicities of Verma
modules; this is one of the original Kazhdan-Lusztig
conjectures from \cite{KL} which was independently proven by
Beilinson and Bernstein \cite{BeBe} and by Brylinski and Kashiwara
\cite{BK}. 

The KL polynomial $\KLP_{u,v}$ can be
computed merely in terms of the structure of the lower interval
$[e,v]$, where $e\in W$ is the identity element, which is the minimum
in the Bruhat order. The procedure relies on detailed knowledge
about the elements of $[e,v]$. It has, however, been conjectured
independently by Dyer \cite{MD} and Lusztig that the KL polynomial is an
invariant of the poset isomorphism class of $[u,v]$. This
is known as the combinatorial invariance conjecture. Most
substantial progress towards this conjecture has had to do with lower
intervals and is captured in the following statement: 
\begin{theorem}\label{th:Winvariance}
Let $v,v'\in W$ and suppose $f:[e,v]\to[e,v']$ is a poset
isomorphism. Then, $\KLP_{u,v} = \KLP_{f(u),v'}$ for all $u\le v$.
\end{theorem}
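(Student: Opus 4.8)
The plan is to deduce the statement from the corresponding invariance of Kazhdan--Lusztig $R$-polynomials, which in turn rests on the theory of special matchings. Recall first that for $u\le v$ the polynomial $\KLP_{u,v}$ is the unique element of $\mathbb{Z}[q]$ of degree at most $(\ell(v)-\ell(u)-1)/2$ with $\KLP_{v,v}=1$ satisfying
\[
q^{\ell(v)-\ell(u)}\,\KLP_{u,v}(q^{-1}) \;=\; \sum_{u\le z\le v} R_{u,z}(q)\,\KLP_{z,v}(q),
\]
where $R_{u,z}$ denotes the $R$-polynomial of $[u,z]$. Since $\ell(v)-\ell(u)$ is just the rank of the graded poset $[u,v]$, hence a poset invariant, it suffices to prove the analogous statement for $R$-polynomials: if $g\colon[e,z]\to[e,z']$ is an isomorphism of lower intervals, then $R_{x,z}=R_{g(x),z'}$ for all $x\le z$. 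Granting this, one applies it to the restrictions of $f$ to the lower intervals $[e,z]$ with $z\le v$ (each such restriction being an isomorphism $[e,z]\to[e,f(z)]$) and feeds the resulting equalities $R_{u,z}=R_{f(u),f(z)}$ into the displayed identity to obtain $\KLP_{u,v}=\KLP_{f(u),v'}$ by induction on $\ell(v)-\ell(u)$.

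To treat the $R$-polynomials I would use special matchings. For a graded poset $P$ with a minimum, a \emph{special matching} is an involution $M$ of $P$ such that $M(x)$ covers or is covered by $x$ for every $x$, and such that $x\lessdot y$ with $M(x)\ne y$ forces $M(x)\lessdot M(y)$. The basic examples on a lower Bruhat interval $[e,v]$ are the multiplication matchings $x\mapsto xs$ attached to the right descents of $v$ (and, symmetrically, the left descents); these are precisely the matchings governing the usual $R$-polynomial recursion. The essential input --- and, I expect, the main obstacle --- is the theorem of Brenti, Caselli and Marietti that this recursion remains valid for \emph{every} special matching $M$ of $[e,v]$, not only the multiplication ones: writing $w=M(v)\lessdot v$ (the top element of a lower interval is always matched downward, and $M$ restricts to a special matching of $[e,w]$), one has
\[
R_{x,v} \;=\; \begin{cases} R_{M(x),w} & \text{if } M(x)\lessdot x,\\[2pt] q\,R_{M(x),w}+(q-1)\,R_{x,w} & \text{if } x\lessdot M(x).\end{cases}
\]
The proof of this is itself an induction on $\ell(v)$, and its technical core is a lemma to the effect that any two special matchings $M,N$ of a lower Bruhat interval act on each of their joint orbits like the reflections of a rank-two parabolic subgroup, which is exactly what forces the $M$-recursion and the $N$-recursion to compute the same polynomials. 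This is the step where one genuinely uses that $[e,v]$ is a Bruhat interval rather than an arbitrary graded poset carrying a special matching.

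Granting that, the invariance of $R$-polynomials follows by induction on $\ell(z)$, the cases $\ell(z)\le 1$ being trivial. For larger $z$, choose any special matching $M$ of $[e,z]$ (one exists, e.g.\ from a descent of $z$). Then $M':=g\circ M\circ g^{-1}$ is a special matching of $[e,z']$, and $g$ restricts to an isomorphism $[e,M(z)]\to[e,M'(z')]$ between lower intervals of strictly smaller rank, so the induction hypothesis gives $R_{y,M(z)}=R_{g(y),M'(z')}$ for all $y\le M(z)$. Because $M(x)\lessdot x$ if and only if $M'(g(x))\lessdot g(x)$, the $M$-recursion for $R_{x,z}$ and the $M'$-recursion for $R_{g(x),z'}$ have the same shape, and their right-hand sides coincide term by term by the inductive identity. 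Hence $R_{x,z}=R_{g(x),z'}$ for all $x\le z$, which together with the first step completes the proof.
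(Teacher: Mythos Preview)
The paper does not give its own proof of Theorem~\ref{th:Winvariance}; it is quoted in the introduction as a known result, attributed to Brenti--Caselli--Marietti \cite{BCM} and, independently, Delanoy \cite{ED} for general Coxeter groups, building on earlier type-specific work of Brenti \cite{FB} and du Cloux \cite{FdC}. There is therefore nothing in the paper to compare your argument against directly. That said, your proposal is a faithful outline of exactly the approach taken in those references: reduce from $\KLP$-polynomials to $R$-polynomials via the defining recursion, introduce special matchings, invoke the theorem that every special matching of a lower Bruhat interval satisfies the $R$-recursion, and then transport a special matching through the given isomorphism and induct on rank. As a summary of the literature argument this is correct, and you are right to flag the ``orbit lemma'' (that two special matchings on a lower Bruhat interval generate a dihedral-like action on each joint orbit) as the technical heart --- it is precisely there that one uses that the poset is a Bruhat interval and not an arbitrary Eulerian poset with a special matching.

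One small correction: in your definition of a special matching, the condition should read $M(x)<M(y)$ rather than $M(x)\lessdot M(y)$. When $M$ moves $x$ down and $y$ up the ranks of $M(x)$ and $M(y)$ differ by three, so insisting on a cover would be too restrictive; compare Definition~\ref{de:SPM}. This does not affect the logic of the rest of your sketch.
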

 For $W$ of general type, this result is due to Brenti,
 Caselli and Marietti \cite{BCM} and independently to Delanoy
 \cite{ED}. The methods build on earlier work by Brenti \cite{FB} and
 du Cloux \cite{FdC}, where the result was established in certain types. 

The more general family of Kazhdan-Lusztig-Vogan (KLV) polynomials was
introduced in \cite{LV,DV}. Let $G$ be a complex connected reductive
algebraic group with non-compact real form $G_\R$. Let $\theta:G\to G$
be the complexification of a Cartan involution of $G_\R$. The fixed
point subgroup $K=G^\theta$ acts on the flag variety $G/B$ with
finitely many orbits \cite{TM}. Each of the indices $u$ and $v$ of a
KLV polynomial $P_{u,v}$ consists of a $K$-orbit closure together with a
choice of local system on it. These polynomials describe the singularities
of $K$-orbit closures and, evaluated at $1$, provide
character coefficients for $G_\R$-representations \cite{LV,DV}.

In this paper, we confine ourselves to the setting $G=\SL$, $G_\R =
\mathrm{SU}^*(2n)$, $K=\Sp$. The following
combinatorially appealing situation then arises: 
we may think of the indices $u$ and $v$ simply as fixed point free involutions in
the symmetric group $S_{2n}$ of permutations of
$\{1,\ldots,2n\}$. Moreover, $P_{u,v}$ is
nonzero if and only if $u\ge v$ in the Bruhat order on
$S_{2n}$. Denote by $\br(F_{2n})$ its subposet induced by the fixed
point free involutions. The maximum of $\br(F_{2n})$ is the reverse
permutation $w_0$, the longest element in $S_{2n}$. Our main result
is the following combinatorial invariance assertion for KLV polynomials:
\begin{theorem}\label{th:mainv1}
  If $f: [u,w_0] \to [u',w_0]$ is a poset isomorphism of upper
  intervals in $\br(F_{2n})$, then $P_{v,u}= P_{f(v),u'}$ for all fixed point free involutions $v\ge u$.
\end{theorem}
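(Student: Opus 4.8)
The plan is to adapt, in an order-dual and twisted form, the inductive scheme behind Theorem~\ref{th:Winvariance}. One might first hope for a shortcut --- an order isomorphism realizing the upper interval $[u,w_0]$ in $\br(F_{2n})$ as a lower interval in some Coxeter group under which $P_{v,u}$ becomes an ordinary Kazhdan--Lusztig polynomial, so that Theorem~\ref{th:Winvariance} applies directly --- but lacking such a reduction for the pair $(\SL,\Sp)$ I would argue by induction on $\ell(w_0)-\ell(u)$. The base case $u=w_0$ is immediate since $P_{w_0,w_0}=1$. For the inductive step the first ingredient is a recursion from the Lusztig--Vogan module \cite{LV,DV} attached to the action of $\Sp$ on $\SL/B$: when $u\neq w_0$ there is a simple reflection $s$ with $\ell(su)>\ell(u)$; then $s$ does not commute with $u$, the twisted conjugate $u':=sus$ is again a fixed point free involution with $u'\gtrdot u$ in $\br(F_{2n})$, and the module structure yields a closed formula expressing each $P_{v,u}$ with $v\in[u,w_0]$ in terms of the polynomials $P_{x,u'}$ for $x\in[u',w_0]$, their top coefficients, and the position of $v$ relative to $svs$ (above, below, or --- in the ``imaginary'' case --- equal). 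By the inductive hypothesis applied to the shorter interval $[u',w_0]$, the polynomials $P_{x,u'}$ are invariants of the poset $[u',w_0]$.

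The second step is to make this recursion intrinsic to the poset $[u,w_0]$. The map $v\mapsto svs$ is an involution of $F_{2n}$ that pairs each element with a Bruhat neighbour away from its fixed locus, and it induces on $[u,w_0]$ a structure playing the role of a special matching in the sense of Brenti--Caselli--Marietti \cite{BCM,ED} --- but a genuinely partial one: the fixed points (the $v\geq u$ containing the transposition that $s$ moves) form a subposet on which the recursion behaves differently, so the matching must be packaged together with this subposet. One then checks that $[u',w_0]$, the relevant polynomials $P_{x,u'}$, and all the combinatorial data feeding the recursion are recoverable from $[u,w_0]$ equipped with this matching; hence $P_{v,u}$ is determined by $[u,w_0]$ together with one such matching.

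It remains to remove the dependence on the chosen matching, i.e.\ to show that a poset isomorphism $f\colon[u,w_0]\to[u',w_0]$ --- which need not respect the Coxeter origin of matchings --- carries the recursion for $P_{v,u}$ to the one for $P_{f(v),u'}$. Following the architecture of \cite{BCM,ED}, I would (i) classify the special matchings of intervals of $\br(F_{2n})$, showing each is locally modelled on a twisted-conjugation move, and (ii) prove a commutation lemma: any two special matchings of $[u,w_0]$ are connected by a chain in which consecutive members generate a dihedral action on a short subinterval, on which independence of the computed polynomial follows from an explicit small-rank identity. Granting this, one computes $P_{f(v),u'}$ using a matching of $[u',w_0]$ coming from a twisted generator, transports it through $f^{-1}$, identifies the resulting computation with the original one, and concludes $P_{v,u}=P_{f(v),u'}$.

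The main obstacle is steps (i)--(ii): the classification of special matchings of the restricted order $\br(F_{2n})$ and the commutation lemma. Already the technical heart of \cite{BCM,ED} in the ordinary setting, the analysis here is heavier, both because covering relations in the Bruhat order on fixed point free involutions are more intricate than in a Coxeter group and because of the interplay between ``complex'' twisted-conjugation moves and ``imaginary'' moves whose fixed loci must be tracked through the recursion. A secondary point requiring care is confining the Lusztig--Vogan recursion to $[u',w_0]$, so that the inductive hypothesis genuinely covers every polynomial and top coefficient it invokes.
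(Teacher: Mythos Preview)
Your outline is in the same spirit as the paper's proof: both pass to the dual picture (the paper works in $\br(\iota)$ via $x\mapsto w_0x$), both replace twisted-conjugation moves by poset-theoretic ``special partial matchings'' allowing fixed points, and both aim to show that the KLV recursion becomes intrinsic to the interval. The paper, however, takes a materially simpler route at two points. First, rather than running the induction on the $P$-polynomials themselves (with their $\mu$-coefficients and the attendant need to confine everything to $[u',w_0]$), it reduces at the outset to the $Q$-polynomials $Q_{u,w}=(-q)^{\rho(w)-\rho(u)}R_{u,w}(q^{-1})$, whose recursion (Proposition~\ref{pr:Qrec}) is a clean three-branch formula; invariance of $Q$ then forces invariance of $R$ and $P$. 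Second, instead of the full BCM/Delanoy programme you sketch in (i)--(ii), the paper proves a single structural fact (Proposition~\ref{com}): every SPM of $\iotint{e}{w}$ that is not already a conjugation SPM is automatically \emph{fixed point free} and \emph{commutes} with some fixed-point-free conjugation SPM. This short-circuits both the ``imaginary locus'' bookkeeping you anticipate and the dihedral commutation lemma: Theorem~\ref{res} then follows by a straightforward induction on $\rho(w)$, peeling off the commuting conjugation move first.

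So your proposal is not wrong, but it overestimates the work required. The dihedral classification you flag as the main obstacle is unnecessary here; the decisive simplification is the observation that non-conjugation SPMs in $\br(\iota)$ are extremely constrained (Lemma~\ref{M2}) and in particular never have fixed points, so the ``imaginary'' branch of the recursion only ever arises from genuine conjugation moves, where Proposition~\ref{pr:Qrec} already handles it.
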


The fixed point free involutions $u\in S_{2n}$ which satisfy
$u(i)>n$ for all $i\le n$ form a subposet of $\br(F_{2n})$ which is
isomorphic to the dual of the Bruhat order on $S_n$. When restricted
to such $u$, Theorem \ref{th:mainv1} specialises to the type $A$
version of Theorem \ref{th:Winvariance}, which is the main result
of Brenti's aforementioned work \cite{FB}.

In order to briefly outline the proof idea, let us first describe
Brenti's approach from \cite{FB}. First, Brenti observed that
combinatorial invariance of KL
polynomials is equivalent to that of the associated
KL $R$-polynomials. At the heart of the recurrence relation for the
$R$-polynomials is the map $x\mapsto xs$ for $x\in W$, $s\in
S$. Brenti's key idea was to replace such maps by {\em special
matchings}, which are defined solely in terms of poset properties. By
studying the possible special matchings of lower Bruhat intervals,
Brenti was able to deduce the key fact, namely that the resulting poset theoretic
recurrence actually is well-defined and computes the $R$-polynomials.

Our overall approach is very similar to that of Brenti. Instead of
KL $R$-polynomials, we study what we call $Q$-polynomials, which are a
slight variation of Vogan's KLV $R$-polynomials. In our setting, Vogan's recurrence for the
latter boils down to a recurrence for $Q$-polynomials which relies on
the conjugation map $x\mapsto sxs$ of fixed point free involutions
$x$. In order to obtain a poset theoretic recurrence, we replace
such conjugation maps by {\em special partial matchings} which are similar
to special matchings, except that they may have fixed elements. Again,
the crux is to show that this indeed yields a recurrence which
computes $Q$-polynomials. Rather than fixed point free
involutions, we mostly work with the set of twisted identities $\iota
\subset S_{2n}$; multiplication by $w_0$ provides a bijection between
the two which reverses the Bruhat order. This viewpoint gives us
convenient access to combinatorial tools already developed for
twisted identities.

The remainder of the paper is structured as follows. In the next
section, we agree on notation and recall important definitions and tools. Section \ref{se:iota} contains some
observations about twisted identities. Special
partial matchings are introduced in Section \ref{se:SPM}, where
technical assertions about the structure of such partial matchings
are collected. In the final section we use them in order to
prove our main result.

\section*{Acknowledgement}
N.\ Abdallah was funded by a stipend from Wenner-Gren Foundations.

\section{Preliminaries}
Let $n$ be a positive integer and denote by $W = S_{2n}$ the symmetric group of permutations of the set
$[2n] = \{1,\ldots,2n\}$. Then, $W$ is a Coxeter group with set of
Coxeter generators $S=\{s_1,\ldots,s_{2n-1}\}$, where the $s_i =
\trp{i}{i+1}$ are the adjacent transpositions. If $w= s_{i_1}\cdots
s_{i_k}$, the word $s_{i_1}\cdots s_{i_k}$ is an {\em expression} for
$w$ which is {\em reduced} if $k$ is minimal; then $k = \ell(w)$ is the
number of {\em inversions} of $w$, i.e.\
the number of pairs $1\le i < j \le 2n$ such that $w(i)>w(j)$.

A generator $s\in S$ is called a (right) {\em descent} of $w\in W$ if
$\ell(ws)<\ell(w)$. The set of descents of $w$ is denoted by
$\DR(w)$. Clearly, $s_i\in \DR(w)$ if and only if $w(i)>w(i+1)$.

\subsection{Twisted involutions and twisted identities}

Define an involutive
automorphism $\theta: W\to W$ by $\theta(s_i)=s_{2n-i}$. This is the only nontrivial (if $n>1$) automorphism of
$W$ which preserves $S$. 

Let $\twist = \twist(\theta) = \{w\in W\ \mid \theta(w)=w^{-1}\}$ be
the set of {\em twisted involutions} and
$\iota = \iota(\theta)=\{\theta(w^{-1})w\ \mid  w\in W \}\subset \twist$
the subset of {\em twisted identities}. In other words, $\iota$ is the orbit
of the identity element $e$ when $W$ acts (from the right, say) on
itself by twisted conjugation. Let $\twc$ denote this action; i.e.\
$x\twc w = \theta(w^{-1})xw$ for $x,w\in W$.

Next we recall some properties of $\twist$ and $\iota$. All
unjustified claims can be gleaned from \cite{RS} or \cite{AH1}. Our
notation follows the latter reference. Define a set of symbols $\Ss=\{\s_i \mid i \in
[2n-1]\}$. There is an action of the free monoid $\Ss^*$ on
the set $W$ defined by 
\[ 
w\s=
\begin{cases} 
w s & \text{ if } w \twc s =w, \\
w \twc s & \text{ otherwise.} 
\end{cases} 
\]
It is convenient to use the notational conventions $w\s_{i_1}\cdots
\s_{i_k}=(\cdots((w\s_{i_1})\s_{i_2})\cdots)\s_{i_k}$ and $\s_{i_1}\cdots\s_{i_k} =
e\s_{i_1}\cdots\s_{i_k}$, where $e$ is the identity permutation. The orbit of $e$ under this action is
$\twist$. Thus, if $w\in \twist$, we have $w = \s_{i_1}\cdots\s_{i_k}$
for some $i_j$. We refer to the word $\s_{i_1}\cdots\s_{i_k}$
as an {\em $\Ss$-expression} for $w$ and say it is {\em reduced}
if $k$ is minimally chosen among all such expressions; in that case $\rho(w)
= k$ is called the {\em rank} of $w$. If $w\in \iota$, $2\rho(w) =
\ell(w)$.

Just as with ordinary expressions, the Coxeter relations can be
applied to reduced $\Ss$-expressions: $\cdots \s_i\s_{i+1}\s_i\cdots =
\cdots \s_{i+1}\s_i\s_{i+1}\cdots$ and $\cdots \s_i \s_j\cdots = \cdots \s_j\s_i\cdots$
if $|i-j|> 2$. Unlike for ordinary expressions this is not in
general the case for arbitrary $\Ss$-expressions. For
example, with $n=2$,
\[
\s_2\s_3\s_2\s_1\s_2 = \s_2\s_1\s_3 = s_1s_3s_2s_1s_3 = 4231
\]
which is different from 
\[
\s_2\s_3\s_1\s_2\s_1 = \s_1\s_2\s_3 = s_2s_3s_1s_2s_3 = 3421.
\]

A useful property of $\iota$ is that $w\s \in \iota$ holds whenever
$w\in \iota$, $s\in \DR(w)$. Thus, if $w\twc s = w$ for $w\in \iota$,
$s\in S$, then $s\not \in \DR(w)$.

\subsection{The Bruhat order}

When applied to elements of $W$, $\le$ denotes the {\em Bruhat order}. We shall make use of several
well-known characterisations which all can be found in \cite{BB}:
\begin{theorem}[Subword Property of $W$]\label{th:origsubword}
Let $x,y\in W$, and suppose $s_{i_1}\cdots s_{i_k}$ is a reduced
expression for $y$. Then, $x\le y$ holds if and only if $x =
s_{i_{j_1}}\cdots s_{i_{j_l}}$ for some $1\le j_1<\cdots<j_l\le k$. 
\end{theorem}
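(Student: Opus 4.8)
The plan is to read the statement as an equivalence between the subword order on a fixed reduced expression and the Bruhat order in its standard incarnation --- the reflexive--transitive closure of $z\mapsto zt$ over reflections $t$ with $\ell(z)<\ell(zt)$, as set up in \cite{BB} --- and to prove the two implications separately, each by induction on $k=\ell(y)$; nothing below will use that $W$ is a symmetric group. The sole nontrivial input is the \emph{Lifting Property} of the Bruhat order (see \cite{BB}), from which I would extract exactly three consequences, all for $u<w$ and $s\in\DR(w)$: first, $us\le w$ unconditionally; second, if $s\notin\DR(u)$ then $u\le ws$; third, if $s\in\DR(u)$ then $us\le ws$. These I would cite rather than reprove, since their verification is where the genuine Coxeter-theoretic content (the Exchange and Deletion conditions) lives. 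I also note that the statement only asks that $x$ equal \emph{some} subproduct $s_{i_{j_1}}\cdots s_{i_{j_l}}$, not necessarily a reduced one, so reducedness of subwords never has to be tracked.

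For the implication $x\le y\Rightarrow$ ``$x$ is a subword of $s_{i_1}\cdots s_{i_k}$'', the case $x=y$ is trivial, so I assume $x<y$ and $k\ge1$, set $s=s_{i_k}\in\DR(y)$ and $y'=ys$, and note $s_{i_1}\cdots s_{i_{k-1}}$ is a reduced expression for $y'$ with $\ell(y')=k-1$. If $s\notin\DR(x)$, the Lifting Property gives $x\le y'$, and the induction hypothesis writes $x$ as a subword of $s_{i_1}\cdots s_{i_{k-1}}$, hence of $s_{i_1}\cdots s_{i_k}$. If $s\in\DR(x)$, the Lifting Property gives $xs\le y'$, the induction hypothesis writes $xs=s_{i_{p_1}}\cdots s_{i_{p_r}}$ with $p_1<\cdots<p_r<k$, and then $x=(xs)s=s_{i_{p_1}}\cdots s_{i_{p_r}}s_{i_k}$ exhibits $x$ as a subword of $s_{i_1}\cdots s_{i_k}$.

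For the converse, suppose $x=s_{i_{j_1}}\cdots s_{i_{j_l}}$ with $j_1<\cdots<j_l\le k$; if $k=0$ then $x=e\le e$, so assume $k\ge1$ and keep $s=s_{i_k}$, $y'=ys<y$. If $j_l<k$, all chosen positions lie in $\{1,\ldots,k-1\}$, so $x$ is a subword of the reduced expression $s_{i_1}\cdots s_{i_{k-1}}$ of $y'$ and the induction hypothesis yields $x\le y'<y$. If $j_l=k$, write $x=x's$ with $x'=s_{i_{j_1}}\cdots s_{i_{j_{l-1}}}$ a subword of $s_{i_1}\cdots s_{i_{k-1}}$, so $x'\le y'$ by induction; if $x'=y'$ then $x=y$, and otherwise $x'<y'<y$ and the Lifting Property applied to $x'<y$ and $s\in\DR(y)$ gives $x=x's\le y$. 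I do not expect any individual step here to pose a real obstacle once the Lifting Property is granted: the whole argument is elementary bookkeeping around that single tool, and the only place demanding care is keeping the right-descent hypotheses straight each time it is invoked.
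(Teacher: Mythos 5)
Your proof is correct, but note that the paper itself offers no proof to compare against: Theorem \ref{th:origsubword} is recalled as one of the ``well-known characterisations'' of Bruhat order and simply cited from \cite{BB}. Your argument --- induction on $\ell(y)$ in both directions, peeling off the last letter $s=s_{i_k}\in\DR(y)$ and invoking the three standard consequences of the lifting property --- is the classical textbook proof, and every step checks out: in the forward direction the two cases $s\notin\DR(x)$ (giving $x\le ys$) and $s\in\DR(x)$ (giving $xs\le ys$, then append $s_{i_k}$ to the subword produced by induction) are handled correctly, and in the converse the unconditional $x's\le y$ for $x'<y$, $s\in\DR(y)$ settles the case $j_l=k$, with the degenerate case $x'=y'$ giving $x=y$. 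You are also right that reducedness of the subword never needs to be tracked for this formulation. The one point to be careful about is your citation for the lifting property: in \cite{BB} that result appears downstream of the subword property, so quoting it from there would risk circularity; cite instead Deodhar \cite{VD} (Theorem 1.1, which this paper already references for the twisted analogue, Lemma \ref{lift}), where the ``Z-property'' is established directly from the exchange condition independently of the subword characterisation, or else include that verification yourself. With the source of the lifting property so anchored, your proof is complete and is exactly the standard route to the result the paper takes on faith.
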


For $x\in W$, let $x_{i,k}$ denote the $i$-th element when $x(1),
x(2), \ldots, x(k)$ are rearranged increasingly.

\begin{theorem}[Tableau Criterion]\label{crit}
	Given $x,y\in W$, the following conditions are equivalent:
\begin{enumerate}[(i)]
	\item $x \leq y$.
	\item $x_{i,k} \leq y_{i,k}$ for all $s_k \in \DR(x)$ and $1 \leq i \leq k$.
	\item $x_{i,k} \leq y_{i,k}$ for all $s_k \in S \setminus \DR (y)$ and $1 \leq i \leq k$.
\end{enumerate}	
\end{theorem}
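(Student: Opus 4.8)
The plan is to translate the comparisons $x_{i,k}\le y_{i,k}$ into inequalities between \emph{rank functions}, deduce from this the general ``entrywise'' tableau criterion, and then cut the index set down to the descents in (ii) and the non-descents in (iii). For $w\in W$ and $0\le k,j\le 2n$ put $r_w(k,j)=|\{a\le k\ :\ w(a)\le j\}|$. Since $w_{i,k}\le j$ if and only if $r_w(k,j)\ge i$, for each fixed $k$ the assertion ``$x_{i,k}\le y_{i,k}$ for all $1\le i\le k$'' is equivalent to ``$r_x(k,j)\ge r_y(k,j)$ for all $j$''. Thus (ii) becomes $r_x(k,\cdot)\ge r_y(k,\cdot)$ whenever $s_k\in\DR(x)$, (iii) becomes the same condition whenever $s_k\notin\DR(y)$, and it is convenient to also consider the condition $(\star)$: $r_x(k,j)\ge r_y(k,j)$ for all $k$ and $j$. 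Note that $r_w(0,j)=0$ and $r_w(2n,j)=j$, that $r_w$ is weakly increasing in its first argument, and that $r_w(k,j)-r_w(k-1,j)\in\{0,1\}$.

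First I would prove (i) $\Leftrightarrow(\star)$, the classical criterion of Ehresmann (for which one may alternatively just cite \cite{BB}). For (i) $\Rightarrow(\star)$: along a saturated Bruhat chain $x=z_0\lessdot\cdots\lessdot z_m=y$, each step is right multiplication by a transposition $(a\ b)$ with $a<b$ and $z_j(a)<z_j(b)$; this changes $r(k,\cdot)$ only for $a\le k<b$, where the value $z_j(a)$ lying in a position $\le k$ is replaced by the larger value $z_j(b)$, so $r_{z_j}(k,\cdot)\ge r_{z_{j+1}}(k,\cdot)$, and composing along the chain yields $r_x\ge r_y$. For $(\star)\Rightarrow$ (i), I would sort $x$ towards $y$: if $x\ne y$, let $k$ be minimal with $x(k)\ne y(k)$, so that the $k$-sets of $x$ and $y$ agree except that one contains $x(k)$ and the other $y(k)$; then $(\star)$ at level $k$ forces $x(k)<y(k)$, hence some position $l>k$ has $x(l)>x(k)$. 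Choosing $l$ so that $x(l)$ is the least value exceeding $x(k)$ among positions $>k$, the permutation $x'=x\cdot(k\ l)$ satisfies $x<x'$ in the Bruhat order, still satisfies $(\star)$ relative to $y$, and has $\ell(x')>\ell(x)$. Such a step is available whenever the current permutation differs from $y$, so the iteration terminates at $y$ and $x\le y$ follows.

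It then remains to prove (ii) $\Rightarrow(\star)$ and (iii) $\Rightarrow(\star)$, the reverse implications being trivial. For (ii), fix $k$ with $s_k\notin\DR(x)$, i.e.\ $x(k)<x(k+1)$, and let $[a,b]$ be the largest interval of positions containing $\{k,k+1\}$ on which $x$ is strictly increasing; by maximality, $a-1=0$ or $s_{a-1}\in\DR(x)$, and $b=2n$ or $s_b\in\DR(x)$. Since $x(a)<\cdots<x(b)$, the set $\{x(1),\dots,x(k)\}$ is $\{x(1),\dots,x(a-1)\}$ together with the $k-a+1$ smallest elements of $\{x(a),\dots,x(b)\}$, which gives the identity
\[
r_x(k,j)=\min\bigl(r_x(a-1,j)+(k-a+1),\ r_x(b,j)\bigr).
\]
Applying (ii) at $a-1$ and $b$ (and using $r_x(0,\cdot)=r_y(0,\cdot)=0$, resp.\ $r_x(2n,j)=r_y(2n,j)=j$, in the endpoint cases), the right-hand side is $\ge\min\bigl(r_y(a-1,j)+(k-a+1),\ r_y(b,j)\bigr)$, and this is $\ge r_y(k,j)$ because $r_y(a-1,j)+(k-a+1)\ge r_y(k,j)$ by the unit-increment bound and $r_y(b,j)\ge r_y(k,j)$ by monotonicity. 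The implication (iii) $\Rightarrow(\star)$ is the mirror image: for $k$ with $s_k\in\DR(y)$ one takes the largest interval $[a,b]$ containing $\{k,k+1\}$ on which $y$ is strictly \emph{decreasing}, derives in the same way the identity $r_y(k,j)=\max\bigl(r_y(a-1,j),\ r_y(b,j)-(b-k)\bigr)$, and concludes from (iii) at $a-1$ and $b$ together with the bounds $r_x(k,j)\ge r_x(a-1,j)$ and $r_x(k,j)\ge r_x(b,j)-(b-k)$.

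The step I expect to be most delicate is $(\star)\Rightarrow$ (i): one must verify that the transposition $(k\ l)$ in the sorting argument can always be chosen so that $(\star)$ is preserved — this is where the minimality of $k$ and the precise choice of $l$ enter — and the boundary indices $0$ and $2n$ have to be carried along everywhere. By contrast, the two run identities above are routine set counting once the correct statements have been isolated.
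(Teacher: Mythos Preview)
The paper does not prove this theorem; it simply cites it as a well-known characterisation found in \cite{BB}. So there is no ``paper's proof'' to compare against, and your proposal is an independent argument.

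Your reductions (ii)$\Rightarrow(\star)$ and (iii)$\Rightarrow(\star)$ via maximal monotone runs are correct and clean; the two identities
\[
r_x(k,j)=\min\bigl(r_x(a-1,j)+(k-a+1),\,r_x(b,j)\bigr),\qquad
r_y(k,j)=\max\bigl(r_y(a-1,j),\,r_y(b,j)-(b-k)\bigr)
\]
hold exactly as you claim, and the subsequent estimates go through. For the equivalence (i)$\Leftrightarrow(\star)$ you rightly note that one can simply cite \cite{BB}, and I would recommend doing so, because the specific sorting step you wrote down is \emph{not} correct as stated.

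Concretely: take $n=2$, $x=1324$, $y=3142$. One checks $r_x\ge r_y$ everywhere, so $x\le y$. The minimal $k$ with $x(k)\neq y(k)$ is $k=1$, and the least value of $x$ at a position $>1$ exceeding $x(1)=1$ is $x(3)=2$, so your rule gives $l=3$ and $x'=x\cdot(1\ 3)=2314$. But then $r_{x'}(2,1)=|\{a\le 2: x'(a)\le 1\}|=0$ while $r_y(2,1)=|\{a\le 2: y(a)\le 1\}|=1$, so $(\star)$ fails for $x'$ versus $y$ (and indeed $2314\not\le 3142$). Thus ``least value exceeding $x(k)$'' is the wrong selection rule; the minimality of $k$ alone does not rescue it. A choice that does work is $l>k$ minimal with $x(l)>x(k)$ (which in the example gives $l=2$ and $x'=3124\le y$), or alternatively one can argue by induction on $\ell(y)$ using a descent of $y$ and the lifting property, as is done in standard references. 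Either way, the delicate step you flagged really is delicate, and the version you sketched needs to be repaired or replaced by a citation.
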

\begin{theorem}\label{th:stdcrit}
Given $x,y\in W$, $x\le y$ holds if and only if $x[i,j] \le y[i,j]$ for all
$i,j\in[2n]$, where $w[i,j] = |\{k\in \{i,\ldots,2n\}\mid w(k)\le j\}|$.
\end{theorem}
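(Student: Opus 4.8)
The plan is to deduce the criterion from the Tableau Criterion (Theorem~\ref{crit}), after first rewriting the statistic $w[i,j]$ in terms of the sorted prefixes $x_{i,k}$ that appear there; one implication then follows by inspection, and for the other I would invoke the standard reflection-chain description of the Bruhat order together with a short monotonicity computation.

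The first step is an elementary counting reformulation. Fix $w\in W$, $k\in[2n-1]$ and $1\le i\le k$. The $i$-th smallest of the numbers $w(1),\dots,w(k)$ satisfies $w_{i,k}\le j$ exactly when at least $i$ of those numbers are $\le j$; since $w$ is a bijection, exactly $j$ of all $2n$ values are $\le j$, so $|\{m\le k:w(m)\le j\}| = j - w[k+1,j]$. Hence $w_{i,k}\le j$ holds if and only if $w[k+1,j]\le j-i$. Applying this in both directions — for a fixed $k$, letting $j$ vary with the choice $j=y_{i,k}$, and letting $i$ vary with the choice $i=|\{m\le k:y(m)\le j\}|$ — I would show that the condition ``$x[i,j]\le y[i,j]$ for all $i,j\in[2n]$'' is equivalent to the condition $(\star)$: ``$x_{i,k}\le y_{i,k}$ for all $1\le i\le k\le 2n-1$''. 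Here the left-hand condition for the index value $i=1$ is vacuous, since $w[1,j]=j$ for every permutation $w$, while every other index value $i$ corresponds to $k=i-1$ in $(\star)$.

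Granting $(\star)$, one direction of the theorem is immediate: if $x[i,j]\le y[i,j]$ for all $i,j$, then in particular $x_{i,k}\le y_{i,k}$ whenever $s_k\in\DR(x)$ and $1\le i\le k$, which is exactly condition (ii) of the Tableau Criterion, so $x\le y$. For the converse I would argue directly that $x\le y$ forces $x[i,j]\le y[i,j]$, without passing back through $(\star)$. By the standard characterisation of the Bruhat order (see \cite{BB}), $x\le y$ means there is a chain $x=z_0,z_1,\dots,z_m=y$ with $z_{r+1}=z_r\,\trp{a_r}{b_r}$ for a transposition with $a_r<b_r$ and $\ell(z_{r+1})>\ell(z_r)$, the latter inequality forcing $z_r(a_r)<z_r(b_r)$. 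It then suffices to observe that a single such step never decreases $w[i,j]$: going from $z_r$ to $z_{r+1}$ merely interchanges the values at positions $a_r$ and $b_r$, so the multiset $\{z_r(m)\mid i\le m\le 2n\}$ is unchanged unless exactly one of $a_r,b_r$ lies in $\{i,\dots,2n\}$; since $a_r<b_r$, that position can only be $b_r$, and then the tail value $z_r(b_r)$ is replaced by the strictly smaller value $z_r(a_r)$, which can only increase the number of tail values that are $\le j$. Chaining these inequalities along $z_0,\dots,z_m$ gives $x[i,j]\le y[i,j]$ for all $i,j$.

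I expect the only real care to be needed in the bookkeeping of the first step — correctly matching the index shift $k\leftrightarrow k+1$ and separating off the degenerate case $i=1$ — and in the position analysis of the reflection step, where one must keep straight which of the two swapped positions lies in the range $\{i,\dots,2n\}$; neither point is deep, but either is easy to slip on. If one wishes to avoid citing reflection chains, the converse can instead be obtained entirely within the Tableau Criterion, combining its conditions (ii) and (iii) with the lifting property to cover the descents $s_k\in\DR(y)\setminus\DR(x)$, but the monotonicity argument above is the cleaner route.
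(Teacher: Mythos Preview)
Your argument is sound, but there is nothing in the paper to compare it against: Theorem~\ref{th:stdcrit} is stated there as one of several ``well-known characterisations which all can be found in \cite{BB}'' and is given no proof. The paper treats it as background, on a par with the Subword Property and the Tableau Criterion, and simply cites Bj\"orner--Brenti.

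That said, your plan is correct and is essentially one of the standard proofs. The reformulation $w_{i,k}\le j \Leftrightarrow w[k+1,j]\le j-i$ is right, and the bijection between the full tableau condition $(\star)$ and the dot-counting condition goes through exactly as you describe (the case $i=|\{m\le k:y(m)\le j\}|=0$ is handled because then $y[k+1,j]=j$ is already the maximum possible value). For the converse, the reflection-chain monotonicity step is the cleanest route and your position analysis is accurate: when $a_r<i\le b_r$, the tail value $z_r(b_r)$ is replaced by the strictly smaller $z_r(a_r)$, so $z_r[i,j]$ cannot decrease. One small stylistic point: since you already establish the equivalence with $(\star)$, you could equally well get the converse from the Tableau Criterion (condition (iii) gives $(\star)$ at all $k$ with $s_k\notin\DR(y)$, and the remaining $k$ are immediate since $x_{i,k}\le y_{i,k}$ is automatic when $s_k\in\DR(y)$ forces the relevant prefix of $y$ to dominate), avoiding the separate citation of reflection chains; but either route is fine.
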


The Bruhat order on $W$ is a graded poset with minimum element the 
identity permutation $e$, maximum element the reverse permutation
$w_0$ defined by $w_0(i)= 2n+1-i$, and rank function $\ell$. For any subset $X\subseteq
W$, $\br(X)$ indicates the subposet of the Bruhat order induced by
$X$. Obviously, the previous three results may be applied in the
context of any $\br(X)$ just by replacing $W$ with $X$. We next
discuss some additional tools which are applicable when $X=\twist$. Because $W$
is a Weyl group, most are consequences of Richardson and Springer's
\cite[Section 8]{RS}, although the terminology there differs
somewhat from that employed here. In our language, everything can be found
in \cite{AH1}.

The following counterpart of Theorem \ref{th:origsubword} can be
deduced from \cite[Corollary 8.10]{RS}:

\begin{theorem}[Subword Property of $\twist$]\label{th:twsubword}
Let $x,y\in \twist$, and suppose $\s_{i_1}\cdots \s_{i_k}$ is a
reduced $\Ss$-expression for $y$. Then, $x\le y$ holds if and only
if $x = \s_{i_{j_1}}\cdots \s_{i_{j_l}}$ for some $1\le j_1<\cdots<j_l\le k$.
\end{theorem}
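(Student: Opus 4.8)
The plan is to reduce the statement to classical Bruhat-order combinatorics in $W$ by proving a lifting property for the $\Ss$-action on $\twist$ and then running the two standard inductions. I would use the following standard mechanics of the $\Ss$-action, available from \cite{RS,AH1}: for $w\in\twist$ and $s\in S$ one has $\rho(w\s)=\rho(w)\pm1$, with $\rho(w\s)<\rho(w)$ exactly when $s\in\DR(w)$; the step $w\mapsto w\s$ is a \emph{multiplication step} (so $w\s=ws$ and $\ell(w\s)-\ell(w)=\pm1$) when $w\twc s=w$, and a \emph{conjugation step} (so $w\s=\theta(s)ws$ and $\ell(w\s)-\ell(w)=\pm2$) otherwise; and $(w\s)\s=w$, which is a short computation. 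Since $\theta$ preserves $S$ it is an automorphism of the Bruhat order of $W$; combining this with ordinary length bookkeeping one checks in each case that $w$ and $w\s$ are comparable in $W$, with $w\s<w$ precisely when $s\in\DR(w)$ (for instance, in a descending conjugation step $\theta(s)(ws)<ws<w$, and in an ascending one $w<ws<\theta(s)ws$).

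The heart of the argument is a lifting lemma: for $x,y\in\twist$ and $s\in S$,
\begin{enumerate}[(i)]
\item if $s\notin\DR(y)$ and $x\le y$, then $x\s\le y\s$;
\item if $s\in\DR(y)$ and $x\le y$: when $s\notin\DR(x)$ one has $x\le y\s$, and when $s\in\DR(x)$ one has $x\s\le y\s$.
\end{enumerate}
I would prove this by splitting on whether the step at $s$ is a multiplication or a conjugation step for $x$ and, independently, for $y$, and reducing each of the resulting mixed cases to classical Bruhat-order facts in $W$ — the Lifting Property \cite{BB} in its right and left forms, and its corollary that $x\le y$ with $s\notin\DR(y)$ forces $xs\le ys$ — using $\theta$ to handle the extra factor $\theta(s)$ occurring on a conjugation side, and comparing ordinary lengths to decide whether that factor is a left descent. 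For example, when $s$ ascends both $x$ and $y$, via a conjugation step for $x$ and a multiplication step for $y$, one first gets $xs\le ys$ classically; since $ys=\theta(s)y$ here, $\theta(s)$ is a left descent of $ys$ but not of $xs$, so the left Lifting Property upgrades this to $x\s=\theta(s)xs\le ys=y\s$. This lemma is where I expect the main difficulty to lie: unlike the untwisted case, $x$ and $y$ need not take the same type of step, so the argument must interlock a one-sided multiplication on one element with a two-sided conjugation on the other, and keeping the length accounting straight through all the mixed cases is the delicate part.

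Given the lifting lemma, the theorem follows by two inductions on $\rho(y)=k$, in each of which I take $s:=s_{i_k}$, the last generator of the \emph{given} reduced $\Ss$-expression (this is what makes the argument apply to an arbitrary such expression, not just a conveniently chosen one); then $y':=\s_{i_1}\cdots\s_{i_{k-1}}=y\s_{i_k}$ is a reduced $\Ss$-expression of rank $k-1$ with $y'<y$, and $s_{i_k}\in\DR(y)$ while $s_{i_k}\notin\DR(y')$. For the direction ``$x$ is a subword $\Rightarrow x\le y$'': a subword not using the last letter gives $x\le y'<y$ by induction, and a subword using it gives $x=x'\s_{i_k}$ with $x'\le y'$ by induction, whence $x=x'\s_{i_k}\le y'\s_{i_k}=y$ by lifting~(i). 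For the direction ``$x\le y\Rightarrow x$ is a subword'': if $s_{i_k}\notin\DR(x)$, lifting~(ii) gives $x\le y'$, so $x$ is a subword of $\s_{i_1}\cdots\s_{i_{k-1}}$ by induction, hence of the given expression; if $s_{i_k}\in\DR(x)$, lifting~(ii) gives $x\s_{i_k}\le y'$, so by induction $x\s_{i_k}$ is a subword of $\s_{i_1}\cdots\s_{i_{k-1}}$, and appending $\s_{i_k}$ exhibits $x=(x\s_{i_k})\s_{i_k}$ (using $(w\s)\s=w$) as a subword of the given expression. The base case $k=0$ is trivial. Alternatively, as the sentence preceding the statement suggests, one may sidestep the combinatorics and quote \cite[Corollary~8.10]{RS}, where this order is identified with a closure order on orbit varieties.
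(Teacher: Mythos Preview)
The paper does not actually prove this statement; the sentence preceding it defers entirely to \cite[Corollary~8.10]{RS}, where the result is obtained via the geometry of symmetric varieties. Your proposal instead supplies a self-contained combinatorial argument, and it is correct: the lifting lemma you formulate is the paper's Lemma~\ref{lift} augmented by the ascending clause~(i), and the reduction of each of the four multiplication/conjugation combinations to the classical lifting property in $W$ goes through as you indicate (your sample mixed case is representative; in the reverse mixed case, where $x$ takes a multiplication step and $y$ a conjugation step with $s\notin\DR(y)$, one simply uses $x\s=xs\le ys<\theta(s)ys=y\s$). The two inductions on $\rho(y)$ are then the standard ones, and the involutivity $(w\s)\s=w$ handles the descent branch cleanly. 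In effect you invert the logical order implicit in \cite{RS}: there both the subword property and the lifting property are consequences of orbit-closure geometry, whereas you extract $\twist$-lifting directly from ordinary Bruhat lifting in $W$ and deduce the subword property from it. This buys an elementary proof independent of the geometric machinery, at the price of the $2\times2$ case analysis you rightly flag as the delicate part.
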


Comparing Theorem \ref{th:origsubword} with Theorem \ref{th:twsubword} it
should not be too surprising that $\br(\twist)$ is graded with rank
function $\rho$. This is in fact also true for $\br(\iota)$, where
$\rho = \ell/2$. Both have
$e$ as minimum element. The maximum in $\br(\twist)$ is $w_0$, whereas the
maximum in $\br(\iota)$ is $w_0s_1s_3\cdots s_{2n-1}$. 

Let $\invol$ be the set of (ordinary) involutions in 
$W$. Incitti \cite{FI} showed that $\br(\invol)$ is Eulerian. In particular it and its dual $\br(\twist)$
are {\em thin}, meaning that all rank two intervals consist of
exactly four elements. Rank two intervals in the {\em subthin}
poset $\br(\iota)$ have either four or three elements.

The so called {\em lifting property} is a classical result on
$\br(W)$; see \cite[Theorem 1.1]{VD}. We shall not make explicit use of
it, but instead of the following completely analogous result for
$\br(\twist)$ which follows from \cite[Proposition 8.13]{RS}:
\begin{lem}[Lifting Property of $\twist$]\label{lift}
	Let $u,w\in \twist$ with $u\leq w$ and suppose $s\in \DR(w)$. Then,
	\begin{enumerate}[(i)]
		\item $u\s\leq w$.
		\item $s\in \DR(u)\Rightarrow u\s\leq w \s$.
		\item $s\notin \DR(u)\Rightarrow u\leq w \s$.
	\end{enumerate}
\end{lem}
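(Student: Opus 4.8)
The statement is the precise $\twist$-analogue of the classical lifting property for $\br(W)$, and the plan is to transport the standard proof of the latter---which rests on the subword property and on elementary length manipulations---to $\br(\twist)$, using Theorem~\ref{th:twsubword} in place of Theorem~\ref{th:origsubword}, the $\Ss$-action $w\mapsto w\s$ in place of right multiplication by $s$, and $\rho$ in place of $\ell$. First I would collect the elementary facts about the action: splitting into the cases $w\twc s=w$ and $w\twc s\neq w$ and performing a short length computation in $W$, one checks that $(w\s)\s=w$ for $w\in\twist$ and $s\in S$, and that $\rho(w\s)=\rho(w)-1$ exactly when $s\in\DR(w)$ (otherwise $\rho(w\s)=\rho(w)+1$); combined with Theorem~\ref{th:twsubword} this also yields $w\s\le w$ iff $s\in\DR(w)$, and shows that $w$ admits a reduced $\Ss$-expression ending in $\s$ whenever $s\in\DR(w)$ (append $\s$ to a reduced $\Ss$-expression of $w\s$). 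All of this is already recorded in \cite{AH1}. The one additional ingredient I would need is the refinement of Theorem~\ref{th:twsubword} asserting that $u\le w$ is witnessed by a \emph{reduced} subword, i.e.\ that $u=\s_{i_{j_1}}\cdots\s_{i_{j_l}}$ can be achieved with $l=\rho(u)$; this follows from the exchange/deletion property for $\Ss$-expressions available in \cite[Section~8]{RS}.

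With these facts in hand, the argument runs as follows. Fix $s\in\DR(w)$ and a reduced $\Ss$-expression $w=\s_{i_1}\cdots\s_{i_m}$ with $\s_{i_m}=\s$, so that $w\s=\s_{i_1}\cdots\s_{i_{m-1}}$ is reduced, and choose a reduced subword $\sigma$ of $\s_{i_1}\cdots\s_{i_m}$ evaluating to $u$. If $\sigma$ uses the last position, write $\sigma=\tau\s$ with $\tau$ a subword of $\s_{i_1}\cdots\s_{i_{m-1}}$ evaluating to some $v$; then $v\le w\s$, and since $\tau$ has $\rho(u)-1$ letters and $v\s=u$ one gets $\rho(v)=\rho(u)-1$ and $v=u\s$, so $\rho(u\s)=\rho(u)-1$, hence $s\in\DR(u)$, and $u\s=v\le w\s$. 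If $\sigma$ omits the last position, then $\sigma$ is a subword of $\s_{i_1}\cdots\s_{i_{m-1}}$, so $u\le w\s$; moreover, if $s\in\DR(u)$ then deleting $\s$ from a reduced $\Ss$-expression of $u$ ending in $\s$ gives $u\s\le u\le w\s$, whereas if $s\notin\DR(u)$ then appending $\s_{i_m}$ to $\sigma$ produces a subword of $w$ evaluating to $u\s$, so $u\s\le w$.

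Putting the cases together yields the lemma: since $s\notin\DR(u)$ rules out the first case, part (iii) holds; part (ii) holds in both cases; and part (i) follows from part (ii) together with $w\s\le w$ when $s\in\DR(u)$, and from the appending step when $s\notin\DR(u)$. I expect the only genuine obstacle to be the reduced-subword refinement of Theorem~\ref{th:twsubword}---i.e.\ controlling the length of the subword that witnesses $u\le w$---since everything else is formal bookkeeping expressing that $\s$ behaves like right multiplication by $s$ with respect to $\rho$ and $\le$. Alternatively, and presumably in line with the stated proof, one can bypass this by citing \cite[Proposition~8.13]{RS} directly and matching notation.
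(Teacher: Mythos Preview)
Your proof is correct, and you have anticipated the paper's approach exactly: the paper does not give an argument at all but simply records that the statement follows from \cite[Proposition~8.13]{RS}. Your alternative is a genuine self-contained proof via Theorem~\ref{th:twsubword}, transporting the classical subword proof of the lifting property to $\twist$. The one nontrivial ingredient you single out---that a Bruhat relation $u\le w$ in $\twist$ can always be witnessed by a \emph{reduced} $\Ss$-subword---is indeed the crux, and it does hold (it is a consequence of the exchange/deletion property for $\Ss$-expressions established in \cite{RS} and \cite{AH1}); once granted, the case analysis you give is clean and complete. The trade-off is the expected one: the paper's route is a one-line citation, while yours makes the mechanism visible and would let a reader verify the lemma without consulting \cite{RS}.
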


Consider the following subset of $\iota$:
\[
\Sn = \{w\in \iota \mid \s_n \not \le w\}.
\]
Every element in $\Sn$ has a reduced $\Ss$-expression which consists
entirely of letters in $\{\s_1,\ldots,\s_{n-1}\}$. Given such an
expression, one obtains a reduced expression for an element in
$S_n$ by removing the lines under the letters: $\s_{i_1}\cdots
\s_{i_k}\mapsto s_{i_1}\cdots s_{i_k}$. This map is a bijection
$\varphi: \Sn
\to S_n$. It preserves the Bruhat order, yielding $\br(\Sn)\cong
\br(S_n)$. When restricted to $\Sn$, the main results of the present paper
coincide with Brenti's results on the symmetric group \cite{FB}.
  
\subsection{Permutation diagrams, symmetries and cover relations}

The inversion map $x\mapsto x^{-1}$ is a poset automorphism of
$\br(W)$ and, since it preserves $\iota$, of $\br(\iota)$. Left and
right multiplication by the reverse permutation $w_0$ yield 
poset antiautomorphisms of $\br(W)$. Composing them, we
recover the automorphism $\theta$; i.e.\ $\theta(w) = w_0ww_0$. Therefore,
$x\mapsto w_0x$ and $x\mapsto xw_0$ provide poset isomorphisms
$\br(\twist)\cong \br(\invol)^*$, where $P^*$ denotes the dual poset of $P$. Under both isomorphisms, $\iota$ is sent to the conjugacy class of $w_0$, namely the set $F_{2n}\subset W$ of fixed point free involutions, so that
$\br(\iota)\cong \br(F_{2n})^*$.

We shall sometimes represent $w\in W$ by means of its {\em
diagram}, i.e.\ the graph of $w$. It has a dot in the
integer point $(i,j)$ whenever $w(i) = j$. Theorem \ref{th:stdcrit}
can then be interpreted as follows: $x\le y$ iff for every
$(i,j)\in[2n]^2$, there are at least as many dots weakly
southeast of $(i,j)$ in the diagram of $y$ as there are in the diagram
of $x$.\footnote{It is equivalent, and probably more common, to replace ``southeast'' by
  ``northwest'' in this statement, since $180^\circ$ rotation of
  diagrams coincides with the Bruhat order automorphism $\theta$.}

Left multiplication by $w_0$ amounts to an upside down flip of the
diagram, whereas taking inverses is reflection in the diagonal line
through $(1,1)$ and $(2n,2n)$. It follows that $\twist$ consists of the permutations whose
diagrams are invariant under reflection in the line through
$(1,2n)$ and $(2n,1)$, and that $\iota$ is the subset of elements without any dots
on this line.

Two dots in a permutation diagram form a {\em rise} if the rightmost
dot is also the uppermost; otherwise the dots form a {\em fall}.
 
We shall reserve the notation $u\tl w$ to mean that $u$ is covered by $w$ in
$\br(\twist)$ (hence in $\br(\iota)$ if $u,w\in \iota$). In \cite{FI}, Incitti characterised the cover relation of
$\br(\invol)$ in terms of the diagrams of the involved
involutions. By taking duals and/or restricting, we obtain for free the cover relations in
$\br(\twist)$, $\br(\iota)$ and $\br(F_{2n})$. We reproduce Incitti's description in Figure
\ref{fi:incitti}, adapted to the setting of $\br(\twist)$. Observe that only two
of the six kinds of covers, namely those without dots on the diagonal,
occur in $\br(\iota)$. In particular, every cover in $\br(\iota)$ (respectively, $\br(F_{2n})$) is given by
twisted (respectively, ordinary) conjugation by a transposition. That
is, if $u\tl w$ and $u,w\in \iota$, then $u = w\twc t$
(respectively, $w_0u = tw_0wt$) for some transposition $t$.

\begin{figure}[htb]
\includegraphics{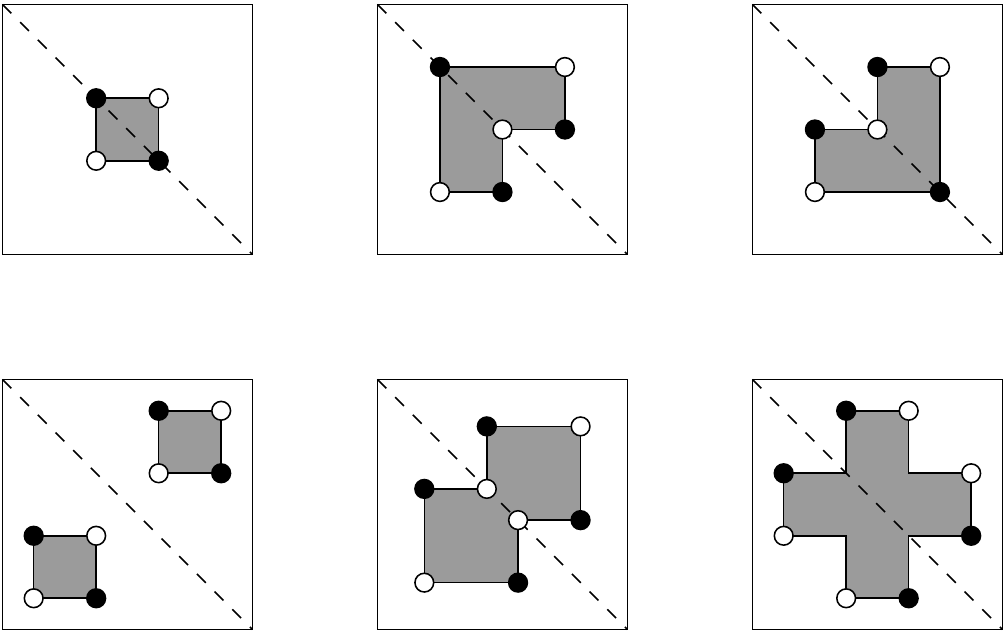}
\caption{All types of covers that occur in $\br(\twist)$. If $w$
  covers $u$, the diagram of $u$ is indicated by white dots and $w$ is
represented by black dots. Dots shared by both diagrams are
omitted. Shaded regions are empty. The pictures are reproduced from \cite{FI}.}\label{fi:incitti}
\end{figure}

By inspecting Figure \ref{fi:incitti}, the next lemma follows
immediately.

\begin{lem}\label{le:twistiotacover}
Given $w\in \twist \setminus \iota$, there exists at most one $u\in \iota$ such that
$u\tl w$.
\end{lem}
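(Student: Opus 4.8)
The plan is to deduce this directly from Incitti's classification of covers, Figure~\ref{fi:incitti}, together with the diagram characterisation of $\iota$. First I would recall that $u\in\iota$ exactly when the diagram of $u$ has no dot on the line $L$ joining $(1,2n)$ and $(2n,1)$, whereas $w\in\twist\setminus\iota$ has at least one such dot. Since the antiautomorphism $x\mapsto w_0x$ of $\br(\twist)$ onto $\br(\invol)^*$ turns the dots of $w$ lying on $L$ into the fixed points of the involution $w_0w$, and an involution of $S_{2n}$ has an even number of fixed points, $w$ in fact carries an even number $\geq 2$ of dots on $L$.

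Next I would fix a cover $u\tl w$ with $u\in\iota$ and look at the picture representing it in Figure~\ref{fi:incitti}: the diagram of $u$ is given by the white dots, that of $w$ by the black dots, and dots common to both are suppressed. As $u$ has no dot on $L$, it shares none with $w$, so every dot of $w$ lying on $L$ must appear, as a black dot, in the picture. Hence the cover is of one of the types in which the white element has no dot on $L$ while the black element does. Inspecting the six types, there is a single such type, in which $w$ has exactly two dots on $L$, occupying prescribed positions of the local window, and $u$ is obtained from $w$ by the prescribed move on exactly those two dots; equivalently, $w_0u$ covers $w_0w$ in $\br(\invol)$ via the move merging the two fixed points $a<b$ of $w_0w$ into the transposition $(a\,b)$, so $w_0u=(a\,b)(w_0w)$.

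It then remains to read off the conclusion. All dots of $w$ on $L$ lie inside the window of this move, so $w$ has exactly two of them; if instead $w$ has $\geq 4$ dots on $L$ then no admissible cover type is available and $w$ covers no element of $\iota$, so the bound holds vacuously. When $w$ has exactly two dots on $L$, their positions — and hence the move, and hence $u$ — are determined by $w$; so there is at most one $u\in\iota$ with $u\tl w$ (precisely one iff the two dots meet Incitti's covering condition). Note that even if the inspection were to reveal more than one type of the relevant shape, uniqueness would persist, since the two $L$-dots of $w$ are fixed and the un-merging move on them is determined. The only real content is the type-by-type check against Figure~\ref{fi:incitti}, exactly the inspection alluded to in the statement; I expect this bookkeeping — recording, for each of the six cover types, which moved dots lie on $L$ — rather than any conceptual difficulty, to be the main point to get right.
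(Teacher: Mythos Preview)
Your proposal is correct and follows essentially the same approach as the paper, which simply states that the lemma ``follows immediately'' by inspecting Figure~\ref{fi:incitti}. You have spelled out in detail precisely what that inspection amounts to: identifying which cover types have the white configuration off the line $L$ while the black configuration meets $L$, and observing that the positions of $w$'s dots on $L$ then pin down $u$ uniquely.
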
 

Most of the action of the present paper takes place in $\br(\iota)$. However, other
subposets of $\br(W)$ turn up frequently in our arguments. In order to
mitigate possible confusion we shall employ the following poset interval notation for
$u,w\in W$:
\begin{eqnarray*}
[u,w] &=& \{x\in W\mid u\le x \le w\},\\
\twint{u}{w} &=& [u,w]\cap \twist,\\
\iotint{u}{w} &=& [u,w]\cap \iota.\\
\end{eqnarray*} 
Some examples can be found in Figure \ref{fi:posets}.
\begin{figure}[thb]
\includegraphics[scale=0.55]{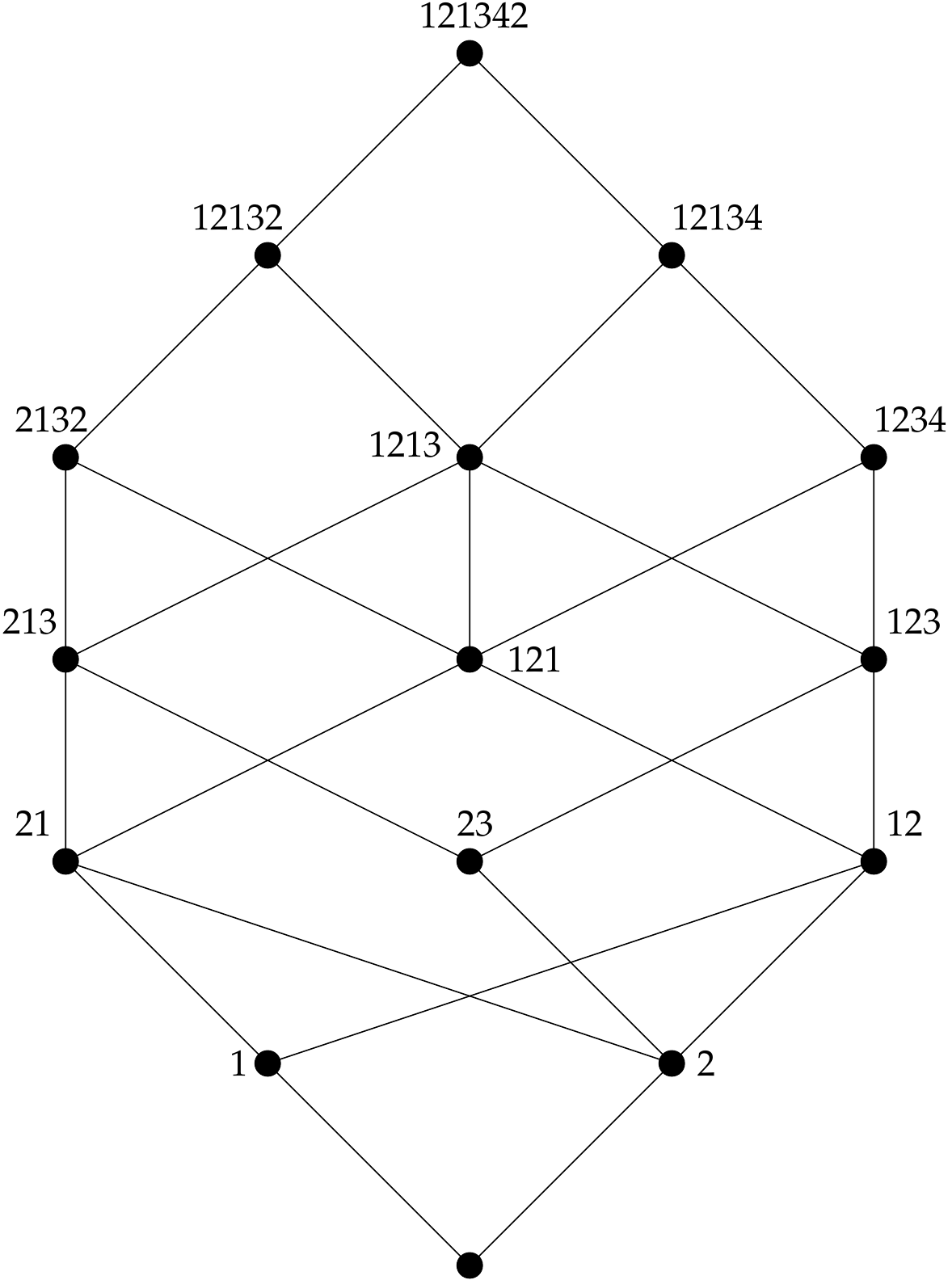}
\includegraphics[scale=0.55]{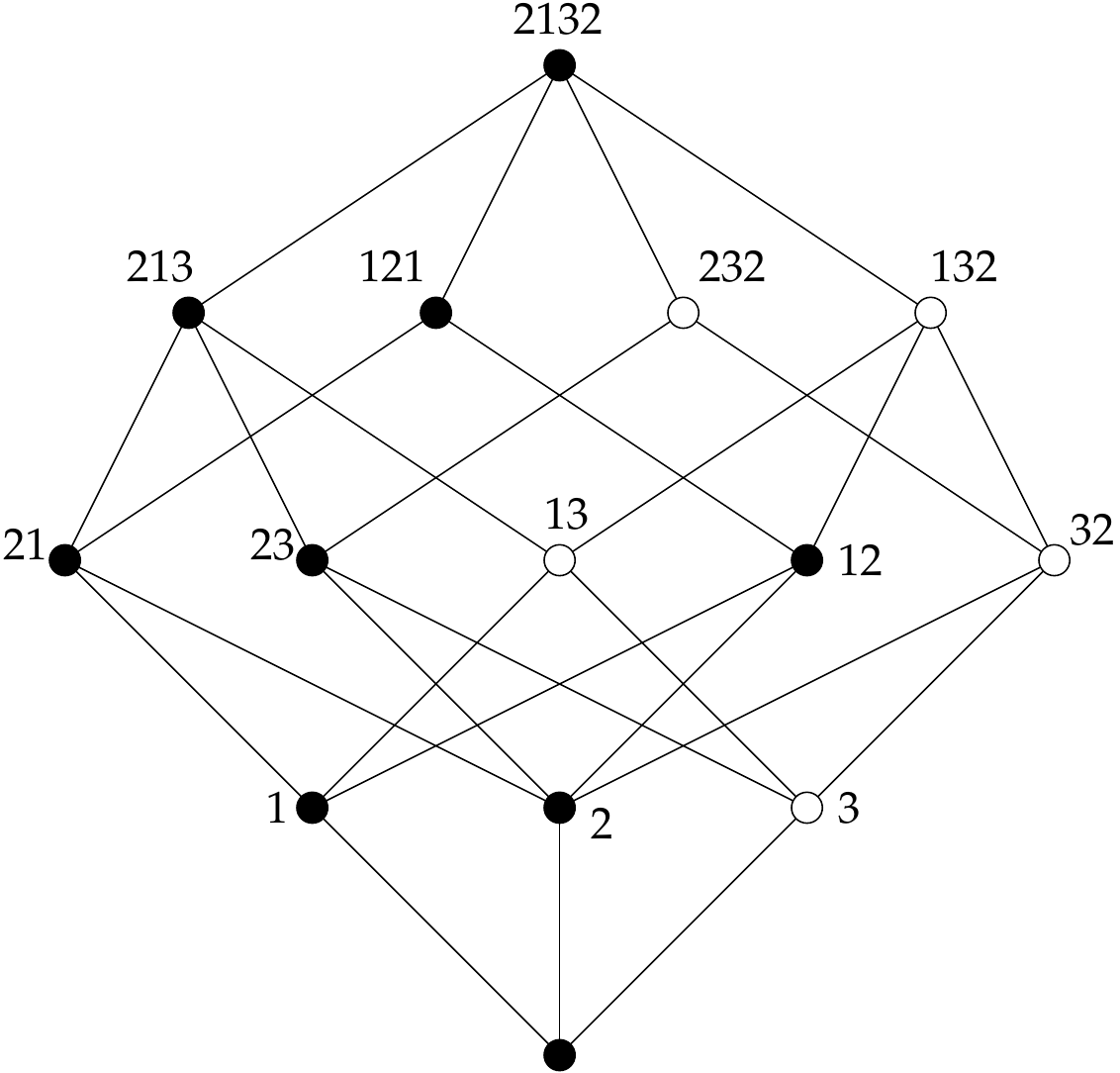}
\caption{Pictures of $\br(\iota) = \iotint{e}{\s_1\s_2\s_1\s_3\s_4\s_2}$
  (left) and $\twint{e}{\s_2\s_1\s_3\s_2}$ (right) when
  $n=3$. Twisted identities correspond to black dots, whereas white dots
  signal elements of $\twist\setminus \iota$. The labels are index
  sequences of reduced $\Ss$-expressions. For example, ``$232$''
  represents the twisted involution $\s_2\s_3\s_2 = s_3s_4s_2s_3s_2$.}  \label{fi:posets}
\end{figure}

\subsection{Kazhdan-Lusztig-Vogan polynomials} 

Introduced in \cite{LV,DV}, the Kazhdan-Lusztig-Vogan (KLV)
polynomials are at the heart of the representation theory of real
reductive groups much in the same way that Kazhdan-Lusztig polynomials
describe representations for complex groups.

In general, a KLV polynomial $P_{\gamma, \delta}(q)$ is indexed by two
local systems $\gamma$ and $\delta$ on orbits of a symmetric subgroup
$K$ on a flag manifold $G/B$. In the present paper we shall restrict
to the setting $G = \SL$,
$K=\Sp$. In this case every local system is
trivial and the orbits are indexed by $\iota$ (or, as was done in the
introduction, by $F_{2n}$; hence the title of the present
paper). Moreover, $\br(\iota)$
coincides with the inclusion order among orbit closures; the details
of this correspondence are described by Richardson and Springer in
\cite[Example 10.4]{RS}. Thus, we may in this setting consider KLV polynomials to be
indexed by pairs of twisted identities. When doing so, we shall use
the superscript $\iota$ to avoid confusion with the polynomials
indexed by $F_{2n}$ in the
introduction. In other words, $P^\iota_{u,w} = P_{w_0u,w_0w}$ whenever
$u,w\in \iota$. For fixed $w\in \iota$, we
then have the following identity in the free
$\mathbb{Z}[q,q^{-1}]$-module with basis $\iota$:
\begin{equation}\label{eq:KLV1}
q^{-\rho(w)}\sum_{v\in \iotint{e}{w}}P^\iota_{v,w}(q)v = \sum_{v\in
  \iotint{e}{w}}\sum_{u\in \iotint{e}{v}}(-1)^{\rho(u)-\rho(v)}q^{-\rho(v)}P^\iota_{v,w}(q^{-1})R_{u,v}(q)u;
\end{equation}
cf.\ Vogan's \cite[Corollary 6.12]{DV}. Here, $R_{u,v}$ denotes a KLV
counterpart of the classical Kazhdan-Lusztig (KL)
$R$-polynomials.

Introducing the convenient variation $Q_{u,w}(q) =
(-q)^{\rho(w)-\rho(u)}R_{u,w}(q^{-1})$ and comparing the coefficients
of a fixed element $u\in \iota$ on each side of (\ref{eq:KLV1}) one
obtains the, from the theory of KL polynomials, familiar-looking
\begin{equation}
q^{\rho(w)}P^\iota_{u,w}(q^{-1}) = q^{\rho(u)}\sum_{v\in \iotint{u}{w}}P^\iota_{v,w}(q)Q_{u,v}(q).
\end{equation}
Together with the restrictions $P^\iota_{x,x} = 1$ and $\deg P^\iota_{u,w} \le
(\rho(w)-\rho(u)-1)/2$, this recurrence uniquely determines the KLV
polynomials. In order to use it, one must first know the
$Q$-polynomials (which {\em are} polynomials). They are completely determined by
the following recurrence and initial values; see \cite[Proposition 5.3]{AH2}.
\begin{prop}\label{pr:Qrec}
Let $u,w\in \iota$. If $s\in \DR(w)$, then
\[
Q_{u,w}(q) = 
\begin{cases} 
Q_{u\twc s,w\twc s}(q) & \text{ if } u\twc s \tl u, \\
qQ_{u\twc s,w\twc s}(q)+(q-1)Q_{u,w\twc s}(q) & \text{ if } u\twc s \tr u, \\
qQ_{u,w\twc s}(q) & \text{ if } u\twc s = u.
\end{cases}
\]
Moreover $Q_{u,u}(q)=1$, and $Q_{u,w}(q) = 0$ if $u\not \le w$.
\end{prop}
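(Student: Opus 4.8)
The recursion is just Vogan's recursion for the KLV $R$-polynomials in disguise: recall that $Q_{u,w}(q)=(-q)^{\rho(w)-\rho(u)}R_{u,w}(q^{-1})$, so the plan is to read the asserted rules off of Vogan's recursion for the $R_{u,w}$ from \cite{DV} and translate them through this substitution. Vogan's recursion, applied with a simple reflection $s\in\DR(w)$, is a case split governed by the \emph{type} of the corresponding simple root relative to the $K$-orbit indexed by $u$; since here $G=\SL$ and $K=\Sp$ so that every local system is trivial, this type is the only datum that survives. I would use the dictionary of \cite[Example 10.4]{RS} (see also \cite{AH1}), which identifies $\br(\iota)$ with the closure order on $\Sp$-orbits on $\SL/B$ and shows that for $w\in\iota$, $s\in\DR(w)$ one always has $w\twc s=w\s\tl w$, while for the lower index $u$ exactly three things can occur: $s$ is a complex descent ($u\twc s\tl u$), a complex ascent ($u\twc s\tr u$), or non-complex ($u\twc s=u$).

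In the two complex cases Vogan's recursion takes the familiar Kazhdan--Lusztig shape, $R_{u,w}=R_{u\twc s,w\twc s}$ and $R_{u,w}=qR_{u\twc s,w\twc s}+(q-1)R_{u,w\twc s}$ respectively, and I would simply substitute $q\mapsto q^{-1}$, multiply by $(-q)^{\rho(w)-\rho(u)}$, and simplify using the rank identities $\rho(w\twc s)=\rho(w)-1$ and $\rho(u\twc s)=\rho(u)\pm1$; this yields the first two cases of the proposition. In the non-complex case the $K$-orbit of $u$ is fixed (that is precisely what $u\twc s=u$ records), there is no associated Cayley transform, and Vogan's recursion collapses to a single term in $R_{*,w\twc s}$; tracking its coefficient and translating gives $Q_{u,w}=qQ_{u,w\twc s}$. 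The initial values are immediate: $R_{u,u}=1$ and $R_{u,w}=0$ for $u\not\le w$ give $Q_{u,u}=1$ and $Q_{u,w}=0$, and the standard degree bound on $R_{u,w}$ is exactly what guarantees that $Q_{u,w}$ is a genuine polynomial.

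Finally, to see that these rules determine all $Q_{u,w}$, I would induct on $\rho(w)$: every branch of the recursion replaces $w$ by $w\twc s$ of rank $\rho(w)-1$, and the base case $\rho(w)=0$ forces $w=e$, hence $u=e$ and $Q_{e,e}=1$. Independence of the choice of $s\in\DR(w)$ needs no separate argument, the $Q_{u,w}$ being bona fide invariants, though it could also be verified by hand. The step I expect to be the real obstacle is the non-complex case: one must faithfully translate Vogan's orbit-and-root formalism into the language of twisted conjugation and confirm that the contribution there is precisely the displayed $q$-multiple of $Q_{u,w\twc s}$, with no Cayley-transform (or ``type II'') terms surviving. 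This is exactly where the fine structure of $\Sp$-orbits on $\SL/B$ --- equivalently, the fact that only two kinds of covers of $\br(\iota)$ occur, as in Figure \ref{fi:incitti} --- has to be brought to bear.
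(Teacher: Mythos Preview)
Your outline is sound, and indeed this is precisely how the result is obtained in the literature; note, however, that the present paper does not supply its own proof of this proposition but instead cites it as \cite[Proposition 5.3]{AH2}. What you sketch---translating Vogan's recursion for the $R_{u,w}$ through the substitution $Q_{u,w}(q)=(-q)^{\rho(w)-\rho(u)}R_{u,w}(q^{-1})$ and using the orbit dictionary of \cite{RS} to identify the root types with the trichotomy $u\twc s\tl u$, $u\twc s\tr u$, $u\twc s=u$---is exactly the content of that reference. Your translation of the two complex cases is correct (the rank shifts $\rho(w\twc s)=\rho(w)-1$ and $\rho(u\twc s)=\rho(u)\mp 1$ make the powers of $-q$ line up), and your identification of the delicate point is accurate: the case $u\twc s=u$ requires knowing that in the $\Sp$-on-$\SL/B$ setting no Cayley transform appears (equivalently, there is no second orbit paired with $u$ via $s$ that would contribute a term), so that Vogan's recursion degenerates to the single-term relation you state. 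That fact is where the structure of $\iota$ specific to this symmetric pair enters, and it is handled in \cite{AH2}.
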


When restricted to $u,w\in \Sn\subset \iota$, both the $R_{u,w}$ and the
$Q_{u,w}$ coincide with the ordinary KL $R$-polynomials of $S_n$,
and the $P^\iota_{u,w}$ of course restrict to the ordinary KL polynomials,
i.e.\ $\KLP_{\varphi(u),\varphi(w)} = P^\iota_{u,w}$.

\section{Structural properties of $\iota$} \label{se:iota}
In this section we obtain some information about the structure of
$\iota$ which shall be of use in the sequel. 

\begin{lem}\label{onecoverinduct}
Suppose $u,u',w,w'\in \iota$ are such that $|\iotint{u}{w}| =
|\iotint{u'}{w'}| = 3$. Then, $|\iotint{u}{w}\cap\iotint{u'}{w'}|\neq 2$. 
\end{lem}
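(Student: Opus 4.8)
The plan is to argue by contradiction: suppose $|\iotint{u}{w}\cap\iotint{u'}{w'}|=2$ while both intervals have exactly three elements. A three-element interval in $\br(\iota)$ is necessarily of the form $\{a,b,c\}$ with $a\tl b\tl c$ and $a\tl c$ (a ``diamond with a missing vertex''), since $\br(\iota)$ is graded by $\rho$ and subthin; in particular the two endpoints $a=u$ and $c=w$ differ in rank by $2$, and the unique middle element $b$ is both the only element strictly between them and the only element covering $u$ below $w$ (equivalently the only element covered by $w$ above $u$). So I would first record this normal form, then enumerate how a two-element subset $\{x,y\}\subseteq\iotint{u}{w}$ can coincide with a two-element subset of $\iotint{u'}{w'}$.

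Next I would do the case analysis on which two of the three elements are shared. Up to the symmetry swapping the two intervals there are essentially three cases for the pair: $\{x,y\}=\{u,b\}$ where $b$ is the middle element; $\{x,y\}=\{b,w\}$; and $\{x,y\}=\{u,w\}$. In the last case, $\{u,w\}=\{u',w'\}$ as sets with $u\le w$ forces $u=u'$ and $w=w'$ (ranks are comparable), so the intervals are equal and the intersection has size $3$, contradiction. In the first case, $\{u,b\}\subseteq\iotint{u'}{w'}=\{u',b',w'\}$: since $\rho(b)=\rho(u)+1$, the pair $\{u,b\}$ consists of two elements at consecutive ranks, so it must be either $\{u',b'\}$ or $\{b',w'\}$; in the former $u=u'$, $b=b'$, and then $w$ and $w'$ are both the unique element covering $b$ in its respective interval — but we then need $w\notin\iotint{u'}{w'}$, i.e. $w\not\le w'$, while $w'\notin\iotint{u}{w}$, i.e. $w'\not\le w$; since $u\tl b\tl w$ and $u'=u\tl b'=b\tl w'$ we would have two distinct covers $b\tl w$ and $b\tl w'$, which is fine in general, but now consider $w$: is $b=b'$ the \emph{unique} element covered by $w$ that lies above $u$? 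Yes by the three-element hypothesis on $\iotint{u}{w}$. The contradiction I expect to extract is that $u\le w'$ must hold: since $u=u'\le w'$ already, $w'\in\iotint{u}{w'}$... this does not immediately bite, so the real work is to use the structure of covers in $\br(\iota)$ (Figure \ref{fi:incitti}, only two cover types, each given by twisted conjugation by a transposition) together with the thinness/subthinness of $\br(\iota)$ and $\br(\twist)$ to force $w\le w'$ or $w'\le w$, collapsing the configuration.

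More concretely, in each surviving case we have two elements $x\tl y$ lying in both intervals, and a third element in each interval attached to this edge in a constrained way; I would show that the union of the two intervals, being a subposet of the subthin graded poset $\br(\iota)$ on at most four elements with prescribed cover relations, is forced to be a single diamond or a single three-chain-with-diagonal, in either case making one interval contained in the other and hence (by rank count) equal — contradicting intersection size exactly $2$. The main obstacle, and where I would spend the most care, is the case $\{x,y\}=\{b,w\}$ (middle and top shared): here $u'\ne u$ a priori, and I need to rule out a genuine ``bowtie'' where $w$ covers $b$ which covers both $u$ and $u'$, with $u\not\le u'$ and $u'\not\le u$. To kill this I would invoke that $b$ has a \emph{unique} lower cover inside $\iotint{u}{w}$ forcing nothing directly, so instead I would use the cover description: $u=b\twc t$ and $u'=b\twc t'$ for transpositions $t,t'$, and $w$ covers $b$, and then analyse the diagrams via Incitti's list to see that the only element of $\iota$ strictly below $w$ and strictly above the rank-$\rho(b)-1$ level that differs from $b$ cannot simultaneously fail to be comparable to both $u$ and $u'$ — i.e. the $3$-element hypothesis on one interval, combined with thinness giving a fourth vertex of the $\{u,b,w\}$-or-$\{u',b',w'\}$ diamond in $\br(\twist)$, pins everything down. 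I expect Lemma \ref{le:twistiotacover} (at most one twisted identity immediately below a given $w\in\twist\setminus\iota$) to be exactly the tool that forecloses the bowtie.
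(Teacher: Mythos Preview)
Your case analysis is set up correctly, but the crucial cases are not closed, and the tools you reach for do not close them. Concretely, after you reduce (up to symmetry) to the ``bowtie'' configuration $u=u'$, $b=b'$, $w\neq w'$ (or its dual $b=b'$, $w=w'$, $u\neq u'$) and to the ``chain'' configuration $\{u,b\}=\{b',w'\}$ (which you identify but then never treat), neither subthinness nor Lemma~\ref{le:twistiotacover} suffices. For instance, in the dual bowtie, the extra $\twist$-elements $y,y'$ in $\twint{u}{w}$ and $\twint{u'}{w}$ need not coincide, and Lemma~\ref{le:twistiotacover} only tells you that \emph{if} they did coincide then $u=u'$; it does not rule out $y\neq y'$. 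Likewise in the chain case $u'\tl u\tl b\tl w$ with both $\iotint{u'}{b}$ and $\iotint{u}{w}$ of size three, nothing you have written forbids this a priori. To actually exclude these configurations by your route you would have to unpack Incitti's cover types in detail, and at that point you are essentially redoing the paper's argument piecemeal.

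The paper's proof avoids the case split entirely by extracting a single rigidity statement. Passing to $\br(\invol)$ via $v\mapsto w_0v$, the three elements $w_0u,\,w_0b,\,w_0w$ of a three-element $\iota$-interval are fixed point free involutions, and the presence of the fourth $\twist$-element $y$ (which has exactly two fixed points) together with Incitti's cover description forces all three to share $n-2$ two-cycles in their cycle decompositions. Any \emph{two} of these three involutions already determine those $n-2$ common two-cycles, and there are exactly three fixed point free involutions extending a given set of $n-2$ disjoint two-cycles (three ways to pair the remaining four points). Hence any two elements of $\iotint{u}{w}$ determine the whole interval, so $|\iotint{u}{w}\cap\iotint{u'}{w'}|\ge 2$ forces $\iotint{u}{w}=\iotint{u'}{w'}$. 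This dispatches your bowtie and chain cases simultaneously: in the chain case, all four of $w_0u',w_0u,w_0b,w_0w$ would share the same $n-2$ two-cycles, contradicting the count of three.
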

\begin{proof}
Since $\br(\twist)$ is thin, the corresponding intervals $\twint{u}{w}$ and $\twint{u'}{w'}$ have four
elements each. Hence, in $\br(\invol)$, $w_0w < x
< w_0u$ and $w_0w < y < w_0u$ where $x$ is fixed point free and
$y$ has exactly two fixed points. Consulting Incitti's
characterisation of the covering relation, this
implies that the disjoint cycle decompositions of $w_0w$, $x$ and $w_0u$
have $n-2$ two-cycles in common. Inspecting any two of these three
elements is sufficient to determine all those common
two-cycles. Since exactly three
fixed point free involutions have $n-2$ fixed two-cycles in common, we
conclude that $|\iotint{u}{w}\cap \iotint{u'}{w'}|\ge 2$ implies $\iotint{u}{w}=\iotint{u'}{w'}$.       
\end{proof}

The preceding lemma immediately yields a simple description of the twisted identities that cover exactly one element:

\begin{lem}\label{manycoat}
Let $w\in \iota$ and suppose $|\{x\in\iota\ |\ x\tl
w\}|=1$. Then, either $w=\s_{n-1}\s_n$ or $\rho(w) = 1$.
\end{lem}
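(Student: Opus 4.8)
The plan is to argue by induction on $\rho(w)$. If $\rho(w)\le 1$ there is nothing to prove, so assume $\rho(w)\ge 2$ and write $u\in\iota$ for the unique element with $u\tl w$. First I would bootstrap the inductive hypothesis onto $u$. Pick any $v\in\iota$ with $v\tl u$ and consider the rank two interval $\iotint{v}{w}$ in $\br(\iota)$. It cannot have four elements, since then both of its rank $\rho(w)-1$ members would be elements of $\iota$ covered by $w$, violating uniqueness; hence $|\iotint{v}{w}|=3$. If $u$ covered two distinct elements $v_1,v_2\in\iota$, then $\iotint{v_1}{w}$ and $\iotint{v_2}{w}$ would be two three-element intervals both containing $\{u,w\}$, hence sharing all three elements by Lemma~\ref{onecoverinduct}, which forces $v_1=v_2$. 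So $u$ again covers exactly one element of $\iota$, and by induction $u=\s_{n-1}\s_n$ or $\rho(u)\le 1$; accordingly $\rho(w)\in\{2,3\}$.

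Since $\br(\twist)$ is thin, $\twint{v}{w}$ has four elements, so besides $u$ there is some $y\tl w$ of rank $\rho(w)-1$ with $y\in\twist\setminus\iota$ (an element of $\iota$ here would again give $|\iotint{v}{w}|=4$). Suppose $\rho(w)=2$. Then $y$ has rank one, and the only rank one element of $\twist\setminus\iota$ is $s_n=\s_n$ (each $\s_i$ with $i\neq n$ lies in $\iota$). Thus $s_n\le w$, and since a reduced $\Ss$-expression for $w$ has length two, the Subword Property of $\twist$ forces it to be $\s_a\s_b$ with $a=n$ or $b=n$. A short computation of the elements $\s_n\s_b$ and $\s_a\s_n$ shows that the only one of these that lies in $\iota$ and has rank two is $\s_{n-1}\s_n$, so $w=\s_{n-1}\s_n$, as wanted.

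It remains to exclude $\rho(w)=3$ with $u=\s_{n-1}\s_n$. Here one computes $\DR(\s_{n-1}\s_n)=\{s_n\}$ and $(\s_{n-1}\s_n)\twc s_n=\s_{n-1}$. If $s_n\in\DR(w)$, then $w\twc s_n\in\iota$ is covered by $w$, and it differs from $\s_{n-1}\s_n$ (otherwise $w=\s_{n-1}$, contradicting $\rho(w)=3$), so $w$ covers two elements of $\iota$ -- a contradiction. If $s_n\notin\DR(w)$, pick any $s\in\DR(w)$; then $s\notin\DR(\s_{n-1}\s_n)$, so the Lifting Property of $\twist$ applied to $\s_{n-1}\s_n\le w$ gives $\s_{n-1}\s_n\le w\s$, and equality of ranks forces $w\s=\s_{n-1}\s_n$, i.e.\ $w=(\s_{n-1}\s_n)\twc s$ for the generator $s=s_k$. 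Inspecting the one-line notation of $(\s_{n-1}\s_n)\twc s_k$ for each $k$ for which this is a genuine rank three up-step, one finds that $s_n$ is always a descent of it, contradicting $s_n\notin\DR(w)$. Hence $\rho(w)=3$ cannot occur, and the induction is complete.

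I expect the main obstacle to be this endgame: verifying that no rank three twisted identity has $\s_{n-1}\s_n$ as its unique lower cover, together with the companion check in the rank two case. Both reduce to finite computations, but they are exactly where the exceptional role of $\s_{n-1}\s_n$ enters, and carrying them out cleanly means combining the lifting property with the explicit description of the cover relation near the bottom of $\br(\iota)$ -- equivalently, with Incitti's covering moves having no diagonal dots.
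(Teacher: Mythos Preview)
Your proof is correct and follows essentially the same approach as the paper's: induction on $\rho(w)$, with Lemma~\ref{onecoverinduct} driving the inductive step (the paper phrases it contrapositively---if $x\tl w$ covers at least two elements then so does $w$---but this is your argument read backwards). The paper handles the base cases $\rho(w)\le 2$ and $\s_{n-1}\s_n\tl w$ by simply declaring them ``easy to verify'', whereas you actually carry them out: your use of the thinness of $\br(\twist)$ to locate $\s_n$ below $w$ in the rank-two case, and your descent/lifting analysis in the rank-three case, are exactly the kinds of checks the paper is suppressing.
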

\begin{proof}
It is easy to verify the assertion for all $w$ that satisfy $\rho(w)\le 2$ or
$\s_{n-1}\s_n \tl w$. Suppose $w$ is some other element and that $x\tl
w$. Applying induction on the rank, we may assume $x$ covers at least two elements. By
Lemma \ref{onecoverinduct}, so does $w$.
\end{proof}

We shall only need the following simple lemma for $w\in \iota$. Proving it
for $w\in \twist$ costs, however, no extra effort.

\begin{lem}\label{le:diag}
	Define $\tau=\s_{i+1}\s_i\s_{i-1}$, where $2\le i \le n-2$,
        and let $w\in \twist$. Then, $\tau \nleq w$ if and only if $w([i-1])\subseteq [i+1]$.
\end{lem}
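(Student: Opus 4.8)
The statement asserts an equivalence, so I would prove the two implications separately, and I expect the ``only if'' direction (the contrapositive: if $w([i-1])\not\subseteq[i+1]$ then $\tau\le w$) to be the substantive one. First, for the easy direction, suppose $w([i-1])\subseteq[i+1]$. I want to show $\tau=\s_{i+1}\s_i\s_{i-1}\nleq w$. The plan is to compute $\tau$ explicitly: since $i\le n-2$, the letters $\s_{i-1},\s_i,\s_{i+1}$ are all among $\s_1,\ldots,\s_{n-1}$, so $\tau\in\Sn$ and under $\varphi$ it corresponds to $s_{i+1}s_is_{i-1}\in S_n$. Pulling this back through the diagram description, one checks that $\tau$ sends some element of $[i-1]$ to a value $\ge i+2$; concretely $\tau(i-1)=i+2$ (one should verify this small computation using the action of $\Ss$ on $e$). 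Then the subword property, or more efficiently Theorem \ref{th:stdcrit} applied to the point $(i-1,i+1)$, shows that $w$ cannot dominate $\tau$ in Bruhat order once $w([i-1])\subseteq[i+1]$, because $w$ has no dot strictly southeast-excluded in the relevant quadrant while $\tau$ does.

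For the converse, assume $w([i-1])\not\subseteq[i+1]$, i.e.\ there exists $a\le i-1$ with $w(a)\ge i+2$. I would use the $\Ss$-subword property (Theorem \ref{th:twsubword}): fix a reduced $\Ss$-expression for $w$ and try to locate a reduced $\Ss$-subexpression equal to $\tau$. Rather than manipulate $\Ss$-expressions directly, the cleaner route is to reflect everything into $\br(\invol)$ via $w\mapsto w_0w$, where the cover relations are governed by Incitti's diagrams, and to argue by induction on $\rho(w)$. In the base-type cases one checks directly. For the inductive step, pick $s\in\DR(w)$ and use the Lifting Property of $\twist$ (Lemma \ref{lift}): compare $\tau$ with $w\s$. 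The key bookkeeping is that the condition ``there is $a\le i-1$ with $w(a)\ge i+2$'' interacts predictably with the operation $w\mapsto w\s$ for $s\in\{s_{i-2},\ldots,s_{i+2}\}$ versus $s$ far away; one shows that either $w\s$ still violates the containment (so $\tau\le w\s\le w$ by induction and Lemma \ref{lift}(i)), or the descent $s$ is one of a few nearby generators and a short explicit argument produces the subword. Symmetrically, the condition $w([i-1])\subseteq[i+1]$ is equivalent, via the reflection symmetry of $\twist$-diagrams, to a condition on the ``other end'' of $w$, which can be exploited to keep the induction from escaping the relevant window of indices.

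The main obstacle I anticipate is the converse direction's inductive step: one must ensure that choosing a descent $s$ of $w$ and passing to $w\s$ does not destroy the hypothesis $w([i-1])\not\subseteq[i+1]$ in an uncontrolled way, and that the finitely many ``bad'' descents near the window $\{i-1,i,i+1\}$ can each be handled by hand. Because the hypothesis involves the relatively coarse data $w([i-1])\subseteq[i+1]$ (a single set-containment rather than a full Bruhat comparison), I expect this can be organised so that almost every descent either is far from the window—leaving the hypothesis intact—or is $s_{i-1}$, $s_i$, or $s_{i+1}$ itself, in which case $w\s$ directly exhibits one of the three letters of $\tau$ and one recurses on a shorter problem. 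A secondary, more routine obstacle is simply being careful with the two conventions (working in $\iota$ vs.\ $\invol$, and ``southeast'' vs.\ ``northwest'' in Theorem \ref{th:stdcrit}); I would fix one convention at the outset and translate the hypothesis once and for all.
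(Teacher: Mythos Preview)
Your easy direction is correct and close in spirit to the paper. The paper, however, does \emph{not} treat the converse as substantially harder and does not use any induction on $\rho(w)$. Instead it applies the Tableau Criterion (Theorem~\ref{crit}(ii)) once, from the side of $\tau$: since $\tau = s_{2n-i+1}s_{2n-i}s_{2n-i-1}s_{i+1}s_is_{i-1}$ has $\DR(\tau)=\{s_{i-1},s_{2n-i-1}\}$, only the two tableaux at $k=i-1$ and $k=2n-i-1$ need comparing, and each collapses to the single inequality on the maximum entry. This yields, for arbitrary $w\in W$, the equivalence
\[
\tau\le w \iff \max\bigl(w(1),\dots,w(i-1)\bigr)\ge i+2 \ \text{and}\ \max\bigl(w(1),\dots,w(2n-i-1)\bigr)\ge 2n-i+2,
\]
so both implications come out simultaneously with no induction. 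Diagram symmetry of $\twist$ then shows the two conditions are equivalent, giving the statement.

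Your inductive plan for the converse is not wrong in principle, but it is underspecified at exactly the delicate point: when the chosen descent $s$ lies in or near the window $\{s_{i-1},s_i,s_{i+1}\}$ (and, by symmetry, near $\{s_{2n-i-1},s_{2n-i},s_{2n-i+1}\}$), passing to $w\s$ can genuinely kill the hypothesis, and the promised ``short explicit argument'' is not supplied. Making that case analysis airtight would cost more than the entire paper proof. I would replace the induction by the Tableau Criterion argument above; if you prefer the dot-counting formulation, note that Theorem~\ref{th:stdcrit} at the single point $(i,i+1)$ already gives ``$w([i-1])\subseteq[i+1]\Rightarrow\tau\nleq w$'' (as you observed), and at the symmetric point $(2n-i,2n-i+1)$ gives the companion condition---but to get the \emph{converse} you still need to know these two points suffice, which is precisely what the descent set of $\tau$ in Theorem~\ref{crit}(ii) buys you.
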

\begin{proof}
Let $w\in W$ and notice that $\tau=s_{2n-(i-1)}s_{2n-i}s_{2n-(i+1)}s_{i+1}s_is_{i-1}.$ Thus,
$\DR(\tau)=\{s_{i-1},s_{2n-i-1}\}$. Following the notation used in Theorem
\ref{crit}, $(\tau_{1,i-1},\ldots,\tau_{i-1,i-1})=(1,\ldots,i-2,i+2)$
and
$(\tau_{1,2n-i-1},\ldots,\tau_{2n-i-1,2n-i-1})=(1,\ldots,2n-i-2,2n-i+2)$. Now,
$\tau \leq w$ if and only if $\tau_{k,i-1}\leq w_{k,i-1}$ for all
$1\leq k\leq i-1$ and $\tau_{k,2n-i-1}\leq w_{k,2n-i-1}$ for all
$1\leq k\leq  2n-i-1$. Therefore, $\tau \leq w$ if and only if
$\max(w(1),\cdots, w(i-1))\geq i+2$ and $\max(w(1),\cdots,
w(2n-i-1))\geq 2n-i+2$. Hence $\tau \nleq w$ if and only if
$w([i-1])\subseteq [i+1]$ or $w([2n-i-1])\subseteq [2n-i+1]$. If
$w\in \twist$, diagram symmetry yields that both inclusions are equivalent, and the result follows.
\end{proof}

\begin{lem}\label{lem2}
Let $a = \s_{i}\s_{i-1}\s_i$
and $b = \s_i\s_{i+1}\s_i$ for some $2\le i \le n-2$. If $c\in
\iota$ covers both $a$ and $b$, then $c= \s_i\s_{i-1}\s_{i+1}\s_i$.
\end{lem}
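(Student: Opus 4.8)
The plan is to pass to reduced $\Ss$-expressions and exploit the Subword Property of $\twist$ (Theorem \ref{th:twsubword}). Write $d=\s_i\s_{i-1}\s_{i+1}\s_i$ for the target element.

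First I would record some local data. Since $2\le i\le n-2$, all of $i-1,i,i+1,i+2$ and their mirror images $2n-i-1,\dots,2n-i+2$ lie in $\{2,\dots,2n-1\}$; one computes that, as permutations, $a=(i-1,i+1)(2n-i,2n-i+2)$ and $b=(i,i+2)(2n-i-1,2n-i+1)$, that $\s_i\s_{i-1}\s_i$ and $\s_i\s_{i+1}\s_i$ are reduced (so $\rho(a)=\rho(b)=3$), and that $a\ne b$. Consequently, if $c\in\iota$ covers $a$ and $b$, then $\rho(c)=\rho(a)+1=4$, as $\br(\iota)$ is graded by $\rho$. The crucial remark is that for any $w\in\twist$ every letter $\s_j$ occurring in a reduced $\Ss$-expression of $w$ satisfies $\s_j\le w$ (Theorem \ref{th:twsubword}); together with the elementary facts that $\s_j\le a$ exactly when $j\in\{i-1,i,2n-i,2n-i+1\}$ and $\s_j\le b$ exactly when $j\in\{i,i+1,2n-i-1,2n-i\}$, this confines the indices of every reduced $\Ss$-expression of $a$ to $\{i-1,i,2n-i,2n-i+1\}$, and of $b$ to $\{i,i+1,2n-i-1,2n-i\}$; these two index sets meet only in $\{i,2n-i\}$.

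Now let $c\in\iota$ cover $a$ and $b$, and fix a reduced $\Ss$-expression $\s_{j_1}\s_{j_2}\s_{j_3}\s_{j_4}$ for it. By Theorem \ref{th:twsubword}, deleting one letter of this word --- say the one in position $p$ --- gives a reduced $\Ss$-expression of $a$, and deleting the letter in some position $q\ne p$ gives one of $b$ (here $p\ne q$, since $a\ne b$). Hence $j_m\in\{i-1,i,2n-i,2n-i+1\}$ for $m\ne p$, $j_m\in\{i,i+1,2n-i-1,2n-i\}$ for $m\ne q$, and therefore $j_m\in\{i,2n-i\}$ for the two indices $m\notin\{p,q\}$. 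A finite check now runs through the $4\cdot 3$ ordered pairs $(p,q)$: for each it lists the reduced $\Ss$-expressions of $a$ and of $b$ compatible with these constrained entries --- a somewhat longer list than one might expect, since a twisted involution can carry ``extra'' reduced $\Ss$-expressions (for instance $\s_i\s_{i-1}\s_i=\s_i\s_{i-1}\s_{2n-i+1}=\s_{i-1}\s_{2n-i}\s_i$) --- and in each case the resulting constraints on $(j_1,j_2,j_3,j_4)$ are either inconsistent or force the word to represent $d$. (A handful of small identities enter in the last clause, e.g.\ $\s_i\s_{i-1}\s_{i+1}=\s_i\s_{i+1}\s_{i-1}=\s_{i+1}\s_{i-1}\s_{2n-i}$, each checked by a quick diagram computation.) Thus $c=d$.

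I expect the main obstacle to be exactly this case check: the extra reduced $\Ss$-expressions rule out a short fixed list, so the bookkeeping must be organised around the small index sets from the second paragraph rather than a naive enumeration. A conceivable alternative would argue directly from the diagram criterion (Theorem \ref{th:stdcrit}) and the length constraint $\ell(c)=8$, forcing $c$ to realise the pattern of $d$ on the two active windows $\{i-1,\dots,i+2\}$ and $\{2n-i-1,\dots,2n-i+2\}$ and to fix every other point; but this seems at least as delicate, since $d$ is not the Bruhat join of $a$ and $b$ --- already $d[i+1,i]=2$ while $a[i+1,i]=b[i+1,i]=1$ --- so one cannot simply invoke a join.
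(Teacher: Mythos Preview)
Your approach is sound and will work, but it differs from the paper's in a way worth noting. The paper's key move is to show first that $c\in\Sn$, i.e.\ that $\s_n\not\le c$: if $\s_n\le c$, then since $\s_n\not\le a$ the subword property forces $c$ to be obtained by inserting $\s_n$ into a reduced $\Ss$-expression for $a$; commuting $\s_n$ to the front and observing that $e\s_n=s_n\notin\iota$ yields a contradiction. Once $c\in\Sn$, the bijection $\varphi:\Sn\to S_n$ converts the problem into the ordinary Bruhat-order statement that the only element covering both $s_is_{i-1}s_i$ and $s_is_{i+1}s_i$ in $S_n$ is $s_is_{i-1}s_{i+1}s_i$, which follows from a short argument about inserting a single generator into the (essentially unique) reduced expressions.

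Your route stays entirely inside $\iota$ and attacks the reduced $\Ss$-expressions of $c$ directly. This is legitimate, but the price is the case check you flag: because each letter $\s_j$ with $j<n$ has a mirror $\s_{2n-j}$, the reduced $\Ss$-expressions of $a$ and $b$ proliferate (your examples $\s_i\s_{i-1}\s_i=\s_i\s_{i-1}\s_{2n-i+1}=\s_{i-1}\s_{2n-i}\s_i$ illustrate this), and the twelve $(p,q)$ cases each require matching against this enlarged list. The constraints you derive---that the two positions outside $\{p,q\}$ carry indices in $\{i,2n-i\}$, hence both represent $\s_i$---do cut this down substantially (for instance, adjacent such positions force a non-reduced word), but the bookkeeping you defer is genuinely the bulk of the argument. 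The paper's reduction to $S_n$ eliminates exactly this mirror ambiguity and is what makes its proof short; you might consider adopting that step even within your framework.
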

\begin{proof}
If $\s_n\le c$, the subword property shows that $c$ is obtained by
inserting the letter $\s_n$ somewhere inside some reduced $\Ss$-expression
for $a$. Since $s_n$ commutes with every generator $s_i\le a$, we may in fact
assume that $\s_n$ is inserted as the first letter. This, however, contradicts
$c \in \iota$. We conclude that $c\in \Sn$. Therefore, the
assertion of the lemma is equivalent to $c' = s_is_{i-1}s_{i+1}s_i$ being
the only element which covers $a' = s_is_{i-1}s_i$ and $b' = s_is_{i+1}s_i$ in
the (ordinary) Bruhat order on $S_n$. To see that this holds, note
that the reduced
expressions that were just used for $a'$ and $b'$ are the only ones that contain
only single occurrences of $s_{i-1}$ and $s_{i+1}$,
respectively. Hence an element which covers both must have a reduced
expression which simultaneously can be obtained by inserting $s_{i-1}$ into
$s_is_{i+1}s_i$ and by inserting $s_{i+1}$ into $s_is_{i-1}s_i$. 
\end{proof}

The final result of this section shows that a twisted identity is nearly
always determined by the elements that it covers.

\begin{prop}\label{samecoat}
Suppose $v,w\in \iota$ cover the same set of elements in
$\br(\iota)$. Then either $v=w$ or $\rho(v)=\rho(w)\leq 2$. 
\end{prop}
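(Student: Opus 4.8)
The plan is to argue by contradiction, assuming $v\neq w$ and $\rho(v)=\rho(w)=:k\geq 3$, and to derive a contradiction by exploiting the thinness of $\br(\twist)$ together with Incitti's diagram description of covers. First I would record the common coatom set $C=\{x\in\iota\mid x\tl v\}=\{x\in\iota\mid x\tl w\}$. By Lemma \ref{manycoat}, since $k\geq 3$ and neither $v$ nor $w$ equals $\s_{n-1}\s_n$, we have $|C|\geq 2$; pick distinct $a,b\in C$. Each of $a,b$ is covered by both $v$ and $w$, so $\iotint{a}{v}$ and $\iotint{a}{w}$ both contain $\{a,v\}$ and $\{a,w\}$ respectively as rank-one steps inside a rank-two interval, and similarly for $b$.

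The key structural input is that in $\br(\twist)$ the rank-two interval $[a,v]_{\mathcal I}$ has exactly four elements (thinness), and likewise $[a,w]_{\mathcal I}$. The plan is to analyze how many of these four elements lie in $\iota$. If both $v,w$ cover $a$ in $\br(\iota)$, then in $\br(\twist)$ we need a common lower cover; I would use Lemma \ref{le:twistiotacover} to control the element of $\twist\setminus\iota$ that can sit between $a$ and $v$, and between $a$ and $w$. The crux is to show that having two distinct elements $v,w\in\iota$ both covering $a$ and both covering $b$ forces, via Incitti's cover types (only the two ``diagonal-free'' types occur in $\br(\iota)$, i.e.\ each such cover is conjugation by a transposition), that $v$ and $w$ are obtained from $a$ by conjugating with transpositions in a way that is rigid enough to pin down $v=w$. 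Concretely, $v=a\twc t_1$ and $w=a\twc t_2$ for transpositions $t_1,t_2$, and similarly $v=b\twc t_3$, $w=b\twc t_4$; comparing the two descriptions of $v$ (and of $w$) constrains the cycle structures.

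I would then run an induction on $k=\rho(v)$, using Lemma \ref{onecoverinduct} as the engine exactly as in the proof of Lemma \ref{manycoat}: assuming the result for smaller rank, each $x\in C$ with $\rho(x)=k-1$ either has $\rho(x)\leq 2$ (handled directly, giving $k\leq 3$ and a small finite check) or, by the inductive hypothesis, is determined by the elements it covers. In the latter case, for $a,b\in C$ the intervals $\iotint{?}{a}$ reaching down force overlaps of the form $|\iotint{x}{a}\cap\iotint{x}{b}|=2$ for suitable common lower cover $x$, contradicting Lemma \ref{onecoverinduct} unless those intervals coincide, which collapses the configuration and yields $v=w$. The remaining low-rank base cases $k\leq 3$, and the exceptional elements with $\s_{n-1}\s_n\tl v$ or $\rho(v)\leq 2$, I would dispose of by explicit inspection of the finitely many relevant diagrams using Figure \ref{fi:incitti}.

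The main obstacle I anticipate is the bookkeeping in the four-element rank-two intervals of $\br(\twist)$ versus the three-or-four-element intervals of $\br(\iota)$: one must carefully track which coatoms of $v$ in $\br(\twist)$ are the coatoms in $\br(\iota)$, and rule out the scenario where $v$ and $w$ differ only by elements of $\twist\setminus\iota$ above the common coatom set. Lemma \ref{le:twistiotacover} is the right tool here, but applying it uniformly across all $a\in C$ simultaneously, and combining that with the conjugation-by-transposition description of $\iota$-covers to force $v=w$, is where the real work lies; I expect the induction via Lemma \ref{onecoverinduct} to streamline this considerably, reducing it to the explicit small-rank verifications.
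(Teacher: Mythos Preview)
Your proposal is a sketch rather than a proof, and the central mechanism you rely on does not close. The inductive step you describe---``each $x\in C$ is determined by the elements it covers, hence \dots''---gives information about the coatoms $a,b$ individually but says nothing that distinguishes $v$ from $w$. You then want to produce a common lower cover $x$ of $a,b$ with $|\iotint{x}{a}\cap\iotint{x}{b}|=2$ and invoke Lemma~\ref{onecoverinduct}; but that lemma requires both intervals to have exactly three elements, and you have no argument for this (generic rank-two intervals in $\br(\iota)$ have four), nor any reason why such an $x$ exists with the needed properties. Finally, your ``finite check'' for $k\le 3$ is not finite: $n$ is arbitrary, so there are infinitely many rank-three elements of $\iota$, and you would need a uniform argument, not inspection.

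The paper's proof proceeds in a direction you do not consider: it goes \emph{up} rather than down. Given $v\neq w$ with the same coatom set, one picks descents $s\in\DR(v)$, $s'\in\DR(w)$ (chosen so that $v\s\neq w\s'$ if possible), sets $\tau=v\s'$, and uses the lifting property plus Lemma~\ref{le:twistiotacover} to show $\tau\in\iota$ covers both $v$ and $w$ and nothing else. This forces $\tau$ to have a reduced $\Ss$-expression ending in $\s'\s\s'$, hence $\{s,s'\}=\{s_i,s_{i+1}\}$, and the entire six-element $\{\s,\s'\}^*$-orbit of $\tau$ can be written out explicitly as fixed point free involutions. From this description one reads off that $v\s$ and $w\s'$ have a unique common coatom; Lemma~\ref{onecoverinduct} and Lemma~\ref{manycoat} then pin $\rho(v\s)=1$. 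The residual case $v\s=w\s'$ is handled by a short descent argument. The key idea you are missing is this construction of $\tau$ and the resulting braid-relation analysis; your downward induction has no analogue of it.
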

\begin{proof}
Assume that $v\neq w$ and that $u\tl v \Leftrightarrow u\tl w$ for
$u\in \iota$. Choose descents $s\in \DR(v)$ and $s'\in
\DR(w)$, if possible so that $v\s \neq w\s'$. 

Suppose first that, indeed, $v\s \neq w\s'$ and let $\tau = v\s'$. By the lifting
property, $w\tl \tau$. Lemma \ref{le:twistiotacover} then implies $\tau \in
\iota$. We claim that $\tau$ covers no element except
$v$ and $w$. Indeed, if $v\neq x \tl \tau$, lifting yields $x\s'
\tl v$ and thus $x\s' \tl w$. Since $w\s'$ is the
only element which does not have $s'$ as a descent among those covered
by $w$, $x\s' = w\s'$ so that $x=w$ as needed. Now, lifting shows
$v\s\s' \tl v\s$. Thus, $\tau$ has a reduced $\Ss$-expression which ends
with $\s'\s\s'$. In particular,
$\{s,s'\} = \{s_i,s_{i+1}\}$ for some $i$, and $\tau\s\s'\s =
\tau\s'\s\s'$. This means that the
$\{\s,\s'\}^*$-orbit of $\tau$ contains exactly six elements, ordered
as $\tau\s\s'\s \tl v\s,w\s' \tl v,w \tl \tau$. Assuming without loss of
generality that $s = s_i$, it follows that the disjoint cycle
decompositions of the corresponding fixed point free involutions are
as follows:
\[
\begin{split}
w_0\tau &= \trp{a}{i}\trp{b}{i+1}\trp{c}{i+2}\cdots,\\
w_0v &= \trp{a}{i}\trp{b}{i+2}\trp{c}{i+1}\cdots,\\
w_0w &= \trp{a}{i+1}\trp{b}{i}\trp{c}{i+2}\cdots,\\
w_0(v\s) &= \trp{a}{i+1}\trp{b}{i+2}\trp{c}{i}\cdots,\\
w_0(w\s') &= \trp{a}{i+2}\trp{b}{i}\trp{c}{i+1}\cdots,\\
w_0(\tau \s\s'\s)&= \trp{a}{i+2}\trp{b}{i+1}\trp{c}{i}\cdots,\\
\end{split}
\]
for some $a<b<c$; here the trailing dots indicate the remaining
two-cycles that all six elements have in common. Since conjugation by
a transposition alters either zero or two of the two-cycles of a fixed
point free involution, it follows at once from this description that
(\rm{i}) neither $v$ nor $w$ covers any element except $v\s$ and $w\s'$, and (ii) $v\s$ and
$w\s'$ cover no common element except $\tau\s\s'\s$. Lemma
\ref{onecoverinduct} then implies that $v\s$ and $w\s'$ can cover no
element at all (common or not) except $\tau\s\s'\s$. By Lemma \ref{manycoat},
$\rho(v\s) = \rho(w\s') = 1$ as desired.

It remains to consider the case $v\s = w\s' = x$. Since this
was not possible to avoid, no descent of $x$ commutes with either $s$
or $s'$. If $x\neq e$, this implies $\DR(x) = \{s_i\}$ and $\{s,s'\}=\{s_{i-1},s_{i+1}\}$ for
some $i$. If in addition $x\s_i$ has a descent, say $s_j$, it cannot commute
with $s_i$. Thus, $s_j\in \{s,s'\}$, implying that either $v$
or $w$ has $s_i$ as a descent, a contradiction. Hence $\rho(x) \le  1$.

\end{proof}

\section{Special Partial Matchings} \label{se:SPM}
Let $\Pi$ be a finite poset equipped with a unique maximum element
$\hat{1}$ and let $\prec$ denote the cover relation. 

\begin{defn}\label{de:SPM}
A {\em special partial matching}, or {\em SPM}, of $\Pi$ is a function $M:\Pi\to \Pi$ such that
\begin{itemize}
\item $M^2 = \mathrm{id}$.
\item $M(\hat{1}) \prec \hat{1}$.
\item For all $x\in \Pi$, either $M(x)\prec x$, $M(x) = x$ or $x\prec M(x)$.
\item If $x\prec y$ and $M(x)\neq y$, then $M(x) < M(y)$.
\end{itemize}
\end{defn}

An SPM without fixed points is nothing but a special
matching in the sense of Brenti \cite{FB}. Like special matchings,
SPMs restrict to principal order ideals:

\begin{prop}\label{pr:SPMrestrict}
Suppose $M$ is an SPM of $\Pi$ and that $M(x) \le x$. Then, $M$
preserves the subposet $I_x = \{y\in \Pi\mid y\le x\}$. In particular,
$M$ restricts to an SPM of $I_x$ if $M(x) \prec x$.
\end{prop}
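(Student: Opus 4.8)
The plan is to show that $M$ maps $I_x$ into itself; the ``in particular'' clause then follows since an SPM of $\Pi$ that preserves a principal order ideal restricts to a function on that ideal satisfying all four axioms of Definition \ref{de:SPM} relative to the maximum $x$ of $I_x$ (the second axiom being exactly the hypothesis $M(x)\prec x$, and the covering relation of $I_x$ being inherited from $\Pi$). So the whole content is the claim that $y\le x$ implies $M(y)\le x$.

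First I would prove this by induction on $x - y$, i.e.\ on the length of a saturated chain from $y$ to $x$ in $I_x$ (which is finite since $\Pi$ is finite). The base case $y = x$ is the hypothesis. For the inductive step, assume $y < x$ and pick $z$ with $y \prec z \le x$; by induction $M(z) \le x$. Now examine how $M$ behaves on the cover $y \prec z$. If $M(y) = z$, then $M(y) = z \le x$ and we are done. Otherwise the fourth axiom gives $M(y) < M(z) \le x$, so again $M(y) \le x$. That is the entire argument for invariance.

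For the final sentence I would then verify, when $M(x)\prec x$, that $M\restriction_{I_x}$ is an SPM of $I_x$: it is an involution of $I_x$ because $M^2=\mathrm{id}$ globally and $M$ preserves $I_x$; $x$ is the maximum of $I_x$ and $M(x)\prec x$ holds by hypothesis, giving the second axiom; for each $y\in I_x$ the trichotomy $M(y)\prec y$, $M(y)=y$, or $y\prec M(y)$ holds in $\Pi$ and, since both $y$ and $M(y)$ lie in $I_x$, the relevant cover relation is the same in $I_x$ as in $\Pi$; and the fourth axiom is inherited verbatim because any cover $y\prec y'$ in $I_x$ is a cover in $\Pi$. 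There is essentially no obstacle here — the argument is a short induction — but the one point to be careful about is that covers in $I_x$ coincide with covers in $\Pi$ between elements of $I_x$, which is immediate since $I_x$ is a down-set, so no new order relations or intermediate elements are introduced.
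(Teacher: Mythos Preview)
Your argument is correct and is essentially identical to the paper's: both prove $M(y)\le x$ by downward induction, choosing $z$ with $y\prec z\le x$, applying the inductive hypothesis to $z$, and then using the fourth SPM axiom (or $M(y)=z$) to conclude. Your additional verification of the ``in particular'' clause is more explicit than the paper's, which leaves it implicit; the only minor imprecision is the phrase ``induction on $x-y$, i.e.\ on the length of a saturated chain,'' which in a possibly non-graded poset is better phrased as downward induction (equivalently, induction on the maximal chain length from $y$ to $x$), but this does not affect the validity of the argument.
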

\begin{proof}
We must show $M(y) \le x$ for all $y\le x$. Pick $y < x$ and
assume by induction $M(y') \le x$ for all $x\ge y' > y$. Choose
$x\ge z\succ y$. Then, either $M(y) = z\le x$ or $M(y) < M(z)\le x$.
\end{proof} 

Special matchings
were designed to mimic multiplication by a Coxeter generator, i.e.\
maps of the form $x\mapsto xs$, in $\br(W)$. Similarly, the idea
behind SPMs is to capture the behaviour of the twisted
conjugation maps $x\mapsto x\twc s$ in $\br(\iota)$. 

\begin{theorem}\label{th:conjmatch}
Let $w\in \iota$ and $s\in \DR(w)$. Then, $x\mapsto x\twc s$ is
an SPM of the lower interval $\iotint{e}{w}$.
\end{theorem}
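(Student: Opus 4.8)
The plan is to verify the four defining conditions of an SPM from Definition \ref{de:SPM} for the map $M: x \mapsto x\twc s$ on the poset $\Pi = \iotint{e}{w}$, whose maximum element is $\hat 1 = w$. First, $M^2 = \mathrm{id}$ is immediate since twisted conjugation by $s$ is an involution: $(x\twc s)\twc s = \theta(s)(\theta(s)xs)s = \theta(s^2)xs^2 = x$. Second, $M(w) = w\twc s$; since $s\in\DR(w)$ and $w\in\iota$, we know $s\not\in\DR(w)$ is false, so $w\twc s \ne w$ (using the recalled fact that $w\twc s = w$ forces $s\notin\DR(w)$), and moreover $w\s = w\twc s$ and this is a cover $w\twc s \tl w$ by Incitti's description (every cover in $\br(\iota)$ is twisted conjugation by a transposition, and here the transposition is $s$; alternatively $\ell(w\twc s) = \ell(w) - 2$). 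Third, I need that for every $x\le w$, exactly one of $x\twc s \tl x$, $x\twc s = x$, or $x \tl x\twc s$ holds. This is a local structural fact about twisted conjugation of twisted identities by a generator: either $s$ fixes $x$ under $\twc$, or $x\s = x\twc s$ and then $\ell(x\twc s) = \ell(x)\pm 2$, giving a cover in $\br(\iota)$ in one direction or the other. I would cite the graded structure of $\br(\iota)$ together with the fact that $x\twc s \in \iota$ and differs from $x$ in at most two cycles (again via Incitti), so the rank changes by at most one; combined with the $\Ss$-action description this pins down the three cases.

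The substantive condition is the fourth (the ``lifting''-type compatibility): if $x \prec y$ in $\iotint{e}{w}$ with $M(x)\ne y$, then $M(x) < M(y)$. This is where I expect the main work. The natural approach is to reduce to the Lifting Property of $\twist$ (Lemma \ref{lift}), since $\br(\iota)$ sits inside $\br(\twist)$ and covers in $\br(\iota)$ need not be covers in $\br(\twist)$, but one can still compare ranks. Given $x\tl y$ in $\br(\iota)$, I would split according to whether $s$ is a descent of $x$ and of $y$. If $s\in\DR(y)$, then $y\s = y\twc s \in\iota$ and $y\s \tl y$; Lemma \ref{lift} then gives either $x\s \le y\s$ (when $s\in\DR(x)$) or $x\le y\s$ (when $s\notin\DR(x)$), and in the latter subcase either $x = y\s$ or $x\s \le y\s$, and one checks that in all subcases either $M(x) = M(y)$ is excluded by hypothesis or $M(x)\le M(y)$ with equality impossible because $M$ is injective — so $M(x) < M(y)$. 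The genuinely delicate subcase is when $s\notin\DR(y)$: then $y\twc s$ is either $y$ itself (if $\twc s$ fixes $y$) or $y\s$ with $y\tl y\s$, but the latter may push us outside $\iotint{e}{w}$ unless one argues it stays below $w$. Here I would use that $x\le y \le w$ and $s\in\DR(w)$ together with the lifting property applied to $w$ to confine everything inside $\iotint{e}{w}$, and then run essentially the same case analysis.

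The main obstacle, concretely, is handling the partial (fixed-point) behaviour: unlike Brenti's special matchings, $M$ may fix elements, so the clean ``$x\s \le y\s$'' dichotomy of the classical lifting property fragments into more subcases, and one must repeatedly check that $M(x)$ and $M(y)$ actually lie in $\iotint{e}{w}$ and that strictness $M(x) < M(y)$ (not just $\le$) holds — the latter always following from injectivity of $M$ once $M(x)\ne M(y)$ is established, and $M(x) = M(y)$ would force $x = y$. I would organise the proof as a table of cases indexed by the pair $(\text{type of } M \text{ at } x, \text{type of } M \text{ at } y)$ — each of $M(z)\tl z$, $M(z) = z$, $z\tl M(z)$ — eliminating impossible combinations using gradedness of $\br(\iota)$ and Lemma \ref{lift}, and in each surviving case deducing $M(x) < M(y)$. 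A clean alternative worth attempting is to pass through $\br(\twist)$: show $M$ extends to (a restriction of) the special matching $x\mapsto x\twc s$ behaviour there, invoke that covers in $\br(\iota)$ refine to rank-one or rank-two steps in $\br(\twist)$ controlled by Lemma \ref{le:twistiotacover}, and transfer the inequality back; whichever route is shorter in the bookkeeping is the one I would commit to.
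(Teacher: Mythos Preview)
Your approach is correct and follows essentially the same route as the paper: use the lifting property (Lemma~\ref{lift}) to confine everything inside $\iotint{e}{w}$ and to handle the fourth SPM axiom, with Lemma~\ref{le:twistiotacover} available for the fixed-point interactions. The first three axioms are indeed routine, and you identify the right tools for the fourth.

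The paper's proof is organised more efficiently than your proposed nine-case table. Rather than splitting on whether $s$ is a descent of $x$ and of $y$, the paper splits on whether $x\twc s = x$ and whether $y\twc s = y$. When neither is fixed, $x\s = x\twc s$ and $y\s = y\twc s$, so the lifting property for $\twist$ applies verbatim and gives $M(x)<M(y)$ in one line. That leaves only the two ``mixed'' configurations, and the paper shows both are \emph{impossible}: if $x\twc s=x$ and $y\twc s<y$, then $x\s>x$ and $y\s<y$ with $x\tl y$, which lifting forbids; if $x\twc s>x$ and $y\twc s=y$, then $y\s=ys\in\twist\setminus\iota$ covers the two distinct twisted identities $x\twc s$ and $y$, contradicting Lemma~\ref{le:twistiotacover}. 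So there is no inequality to verify in the mixed cases at all. Your table would eventually uncover the same impossibilities, but the paper's decomposition gets there with almost no bookkeeping.
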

\begin{proof}
The lifting property shows that $\iotint{e}{w}$ is preserved by
$x\mapsto x\twc s$. The first three properties required by Definition
\ref{de:SPM} are readily checked. It remains to verify the fourth.

Suppose $x\tl y$ and $x\twc s \neq y$. We must show $x\twc s <
y\twc s$. If $x\twc s \neq x$ and $y\twc s \neq y$, this is immediate from the
lifting property. The nontrivial cases that remain to be considered
are $x\twc s = x$, $y\twc s < y$ and $x\twc s > x$, $y\twc s = y$,
respectively. The former case is, however, impossible since it would
imply $x\s > x \neq y\s < y$ contradicting the lifting property. The
latter is in fact also impossible; it implies that $y\s$ covers the
two twisted identities $x\s$ and $y$ which contradicts Lemma \ref{le:twistiotacover}.
\end{proof}

We shall refer to an SPM of the form described in Theorem
\ref{th:conjmatch} as a {\em conjugation SPM}.

\begin{lem}\label{fix}
	Let $w\in \iota$. Suppose $M$ is an SPM of $\iotint{e}{w}$ and
         $\rho(u)\geq 2$ for $u\in\iotint{e}{w}$. Then, $M(u)=u$
        if and only if all $v\in \iota$ with $v \tl u$ satisfy either $M(v)\tl v$ or $M(v)=v$.
\end{lem}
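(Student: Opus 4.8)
The plan is to analyze the local structure of the SPM $M$ around $u$ using the fourth axiom of Definition~\ref{de:SPM}, which governs how $M$ interacts with cover relations. The forward direction is immediate: if $M(u) = u$ and $v \tl u$, then the fourth axiom (applied with $x = v$, $y = u$, noting $M(v) \neq u$ since $M(u) = u \neq v$) forces $M(v) < M(u) = u$, and since $M(v)$ is either a cover or fixed point of $v$ by the third axiom, we get $M(v) \tl v$ or $M(v) = v$. So the content is in the converse.

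For the converse, suppose every $v \tl u$ has $M(v) \tl v$ or $M(v) = v$, and assume for contradiction that $M(u) \neq u$. By the third axiom either $M(u) \tl u$ or $u \tl M(u)$. If $u \tl M(u)$, then setting $w' = M(u)$ we have $u \tl w'$ and $M(w') = u$; I want to derive a contradiction, probably by finding some $v \tl u$ with $M(v) \not< u$. The key tool is the interplay between $M$ and the poset structure of $\iotint{e}{w}$; since $\rho(u) \geq 2$, we can use that $u$ covers at least one element, and the thinness/subthinness results together with Lemma~\ref{le:twistiotacover} and the diamond structure of $\br(\iota)$ to force incompatible behaviour. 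The main case to handle is $M(u) \tl u$: here I would pick some $v \tl u$ with $v \neq M(u)$ (using $\rho(u) \geq 2$ so that $u$ covers at least two elements, or invoking Lemma~\ref{manycoat} and Lemma~\ref{onecoverinduct} to handle the case where $u$ covers only one element separately). Then the fourth axiom gives $M(v) < M(M(u)) = u$, but I need to rule out $M(v) = v$; the hypothesis allows $M(v) = v$, so this alone is not a contradiction — the real work is to show that $M(u) \tl u$ together with the hypothesis is simply consistent, i.e.\ that this direction of the lemma is actually about excluding only the case $u \tl M(u)$.

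Reconsidering: the hypothesis ``$M(v) \tl v$ or $M(v) = v$ for all $v \tl u$'' says no lower cover of $u$ is pushed \emph{up} by $M$. If $u \tl M(u)$, pick any $v \tl u$. By the fourth axiom with $x = v$, $y = M(u)$ — wait, we need $v \tl M(u)$, which need not hold. Instead use $x = v$, $y = u$: since $M(u) \neq v$ (as $M(u) > u > v$... no, $M(u)$ covers $u$ so $M(u) \not< u$), indeed $M(v) \neq M(u)$ would follow, hmm. Let me instead argue: $v \tl u \tl M(u)$, and by Proposition~\ref{pr:SPMrestrict} applied to... no, $M(u) \not\le u$. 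The cleanest route: apply the fourth axiom to $v \tl u$ to get $M(v) < M(u)$ when $M(v) \neq u$; since $M(v) \in \{v, \text{cover of } v\}$ and $v < u < M(u)$ we need $M(v) < M(u)$, consistent. So the contradiction must come from counting covers of $u$ together with covers of $M(u)$: every $v \tl u$ maps under $M$ to something $\tl M(u)$ or fixed, and every such $M(v)$ must then be a lower cover of $M(u)$ other than $u$; combined with $u \tl M(u)$ and Incitti's cover description (Figure~\ref{fi:incitti}) plus Lemma~\ref{onecoverinduct}, the number of lower covers of $M(u)$ becomes too large, contradicting the structure of the six-element $\{\s,\s'\}^*$-orbits catalogued in the proof of Proposition~\ref{samecoat}.

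\textbf{Main obstacle.} The hard part is the bookkeeping in the converse direction: translating ``$M$ fixes all lower covers of $u$ that it doesn't push up'' into a genuine contradiction with $u \tl M(u)$. I expect this requires carefully pairing up the lower covers of $u$ with those of $M(u)$ via $M$, using that $M$ is an involution and the fourth axiom in both directions (for $x \tl u$ and for $x \tl M(u)$), and then invoking the precise diamond structure of $\br(\iota)$ from Section~\ref{se:iota} — especially Lemma~\ref{onecoverinduct} and the cycle-decomposition analysis — to show that the resulting configuration of covers is impossible once $\rho(u) \geq 2$. The hypothesis $\rho(u) \geq 2$ is what rules out the degenerate low-rank cases where $\br(\iota)$ fails to be thin, so some attention must be paid to ensuring the elements involved all have rank large enough for the thinness arguments to apply.
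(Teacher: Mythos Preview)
Your forward direction is correct and matches the paper. The converse, however, has a genuine gap in both subcases.

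\textbf{The case $M(u)\tl u$.} You explicitly contemplate that this case ``is simply consistent'' with the hypothesis, and that only $u\tl M(u)$ needs to be excluded. This is wrong: the lemma is an ``if and only if'', so the converse must rule out $M(u)\tl u$ as well. The argument you miss is one line. If $M(u)\tl u$, set $v=M(u)$. Then $v\tl u$ and $M(v)=M(M(u))=u\tr v$, directly violating the hypothesis that every lower cover $v$ of $u$ satisfies $M(v)\tl v$ or $M(v)=v$. You went looking for some $v\neq M(u)$ below $u$, but the right choice is $v=M(u)$ itself.

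\textbf{The case $u\tl M(u)$.} Your plan here --- counting lower covers of $M(u)$, invoking Lemma~\ref{onecoverinduct}, and borrowing the six-element orbit analysis from the proof of Proposition~\ref{samecoat} --- is far more elaborate than needed and is not actually carried out; as written it is only a sketch of where a contradiction \emph{might} come from. The paper's argument is short and direct: since $\rho(M(u))=\rho(u)+1\ge 3$, Lemma~\ref{manycoat} guarantees $M(u)$ covers some $u'\neq u$. Apply the fourth SPM axiom to the cover $u'\tl M(u)$ (note $M(u')\neq M(u)$ because $u'\neq u$) to obtain $M(u')<M(M(u))=u$. Since $\rho(u')=\rho(u)$, the only possibility among $M(u')\in\{u',\,\text{a cover of }u'\}$ compatible with $M(u')<u$ is $M(u')\tl u'$, so $v:=M(u')$ satisfies $v\tl u$ and $M(v)=u'\tr v$, again contradicting the hypothesis.

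In short, both subcases are handled by producing a single bad lower cover of $u$, not by any global structural analysis of $\br(\iota)$. The hypothesis $\rho(u)\ge 2$ enters only to ensure $\rho(M(u))\ge 3$ so that Lemma~\ref{manycoat} applies.
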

\begin{proof}
	Suppose first that $M(u)=u$ and let $v\tl u$. Since $M$ is an
        SPM, $M(v)< M(u)=u$. Hence, $v\ntriangleleft M(v) $.

	Now assume that $v\ntriangleleft M(v) $ holds for every $v\tl
        u$. In particular, $M(u)\ntriangleleft u$. Suppose next that
        $M(u)\tr u$. By Lemma \ref{manycoat}, $u' \tl M(u)$ for some
        $u'\neq u$. Since $M(u') < M(M(u))=u$, we have $v \tl u$
        and $M(v) \tr v$ for $v = M(u')$, contradicting the hypothesis. 
	We conclude that $M(u)=u.$
	\end{proof}

Taking $w = w_1 = w_2$, the next proposition in particular shows that an SPM of $\iotint{e}{w}$ is
completely determined by its restriction to the atoms, i.e.\ the
elements that cover the identity.

\begin{prop}\label{flm}
	Let $w_1,w_2\in \iota$ and suppose $M_1$ and $M_2$ are SPMs of
        $\iotint{e}{w_1}$ and $\iotint{e}{w_2}$, respectively, such that $M_1(u)=M_2(u)$ for all
        $u\in\iotint{e}{w_1}\cap \iotint{e}{w_2}$ with $\rho(u) \le 1$. Then,
        $M_1(u)=M_2(u)$ for all $u\in \iotint{e}{w_1}\cap \iotint{e}{w_2}$.
\end{prop}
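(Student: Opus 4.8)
The plan is to induct on $k := \rho(u)$, the case $k \le 1$ being exactly the hypothesis. So I would fix $u \in \iotint{e}{w_1} \cap \iotint{e}{w_2}$ with $k \ge 2$ and assume $M_1(v) = M_2(v)$ for all $v$ in this intersection with $\rho(v) < k$; note every $v \tl u$ qualifies, since $v < u \le w_1$ and $v < u \le w_2$. Write $N(v) := M_1(v) = M_2(v)$ for these $v$.

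First I would dispose of the easy cases. Since $M_1$ and $M_2$ agree (via $N$) on the coatoms $v \tl u$ of $\iotint{e}{u}$, Lemma \ref{fix} gives $M_1(u) = u \Leftrightarrow M_2(u) = u$; if both fix $u$, we are done. If instead $M_1(u) = y \tl u$, then $M_1(y) = u$, and since $\rho(y) = k-1 < k$ the inductive hypothesis yields $M_2(y) = u$, whence $M_2(u) = y = M_1(u)$; the case $M_2(u) \tl u$ is symmetric. This leaves the case $u \tl z_1 := M_1(u)$ and $u \tl z_2 := M_2(u)$, where the goal is to prove $z_1 = z_2$.

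The heart of the argument is to identify the coatom set of $\iotint{e}{z_1}$, namely $\{x \in \iota \mid x \tl z_1\}$, as $\{u\} \cup \{N(v) \mid v \tl u \text{ and } v \tl N(v)\}$, which makes no reference to $z_1$ at all. By Proposition \ref{pr:SPMrestrict}, $M_1$ restricts to an SPM of $\iotint{e}{z_1}$ with $M_1(z_1) = u$. For ``$\subseteq$'': a coatom $x \neq u$ cannot satisfy $x \tl M_1(x)$ (that would force $M_1(x) = z_1$, hence $M_1(z_1) = x \neq u$), while the SPM axiom gives $M_1(x) < M_1(z_1) = u$; gradedness of $\br(\iota)$ then forces $M_1(x) \tl u$, and setting $v := M_1(x)$ one reads off $x = N(v)$ with $v \tl u$ and $v \tl N(v)$. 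For ``$\supseteq$'': $u \tl z_1$ is immediate, and for $v \tl u$ with $v \tl N(v)$ the SPM axiom gives $N(v) = M_1(v) < M_1(u) = z_1$, while $\rho(N(v)) = \rho(u) = \rho(z_1) - 1$, so $N(v) \tl z_1$. Since the same description applies verbatim to $\iotint{e}{z_2}$ (using $M_2$ and $N$), the elements $z_1$ and $z_2$ cover exactly the same set in $\br(\iota)$; as $\rho(z_1) = \rho(z_2) = k + 1 \ge 3$, Proposition \ref{samecoat} forces $z_1 = z_2$, closing the induction.

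The main obstacle is precisely this last case, where both matchings send $u$ upward: the key realisation is that the coatom set of $\iotint{e}{z_i}$ is completely determined by the matching data strictly below $u$, so Proposition \ref{samecoat} can be brought to bear. The remaining effort is careful rank-bookkeeping in $\br(\iota)$ together with checking that every element invoked (the coatoms $v \tl u$, the restricted SPMs of $\iotint{e}{z_i}$) lies in the range where Lemma \ref{fix}, Proposition \ref{pr:SPMrestrict}, and the inductive hypothesis genuinely apply.
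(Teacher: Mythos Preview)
Your proof is correct and follows essentially the same strategy as the paper's: induction on $\rho(u)$, dispatching the ``goes down'' and ``fixed'' cases via the inductive hypothesis and Lemma~\ref{fix}, and in the remaining ``both go up'' case showing that $M_1(u)$ and $M_2(u)$ cover the same set of elements so that Proposition~\ref{samecoat} applies. Your treatment of the coatom identification in the last case is in fact more explicit than the paper's (which simply asserts $A_i=\{M_i(x)\mid M_i(x)\tr x\tl u\}$ ``follows immediately from Definition~\ref{de:SPM}''); the only small ellipsis in your argument is not noting that $N(v)\neq u$ before invoking the SPM axiom in the ``$\supseteq$'' direction, but this is immediate since $M_1(u)=z_1\neq v$.
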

\begin{proof}
Let $M_1$ and $M_2$ satisfy the hypotheses of the proposition.
	Employing induction on the length of $u$, suppose that $\rho(u)\geq 2$ and that for all $v$ with $\rho(v)<\rho(u)$, $M_1(v)=M_2(v)$.  We consider three cases.\newline
	\textbf{Case 1:} If $M_1(u)\tl u$, then $u=M_1(M_1(u))=M_2(M_1(u))$ by
        the induction assumption. Therefore, $M_1(u)=M_2(u)$.\newline
        \textbf{Case 2:} Suppose that $M_1(u)=u$. By Lemma \ref{fix} and
        the induction hypothesis, every $v\tl u$ satisfies either
        $M_2(v)=M_1(v)=v$ or $M_2(v)=M_1(v)\tl v$. Using Lemma
        \ref{fix} again, we conclude $M_2(u)=u=M_1(u)$.\newline 
	\textbf{Case 3:} Assume now $M_1(u)\tr u$ and $M_2(u)\tr
        u$; interchanging the roles of $M_1$ and
        $M_2$ if necessary, this is the only remaining case. Let
        $A_1 = \{v \in \iota\setminus\{u\}\ | \ v\tl M_1(u)
        \}$ and $A_2 = \{v \in \iota\setminus\{u\}\ | \ v\tl M_2(u)
        \}$. It follows immediately from Definition \ref{de:SPM} that $A_i=\{M_i(x)\mid M_i(x) \tr x\tl
        u\}$. Thus, $A_1=A_2$ is implied by the induction
        assumption, so $M_1(u)$ and $M_2(u)$ cover the same set of
        elements. Since
        $\rho(M_1(u))=\rho(M_2(u))=\rho(u)+1\geq 3$, $M_1(u) = M_2(u)$ follows
        from Proposition \ref{samecoat}.
\end{proof}

\begin{cor}\label{co:conjSPM}
Suppose $M$ is an SPM of $\iotint{e}{w}$ for some $w\in \iota$. If
there exists an $s\in S$ such that $M(x) = x\twc s$ for all $x\in
\iotint{e}{w}$ with $\rho(x) \le 1$, then $M(x) = x\twc s$ for all $x\in
\iotint{e}{w}$. 
\end{cor}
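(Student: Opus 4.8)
The plan is to deduce Corollary~\ref{co:conjSPM} directly from Theorem~\ref{th:conjmatch} and Proposition~\ref{flm}. First I would invoke Theorem~\ref{th:conjmatch} to produce a genuine conjugation SPM to compare $M$ against. The subtlety here is that Theorem~\ref{th:conjmatch} requires $s\in \DR(w)$, whereas the corollary only hypothesizes $s\in S$. So the first step is to check that the hypothesis ``$M(x) = x\twc s$ for all $x\in\iotint{e}{w}$ with $\rho(x)\le 1$'' already forces $s\in\DR(w)$; I expect this to be the main obstacle, though a mild one. If $s\notin\DR(w)$, then since $w\in\iota$ we would have $w\twc s = w$, and one should argue that this propagates downward: intuitively, if $s$ is not a descent of $w$ it cannot be a descent of the atoms below $w$ either, so $M$ would have to fix every atom, contradicting $M$ being an SPM (an SPM cannot fix $\hat 1 = w$, and if it fixes all atoms then by Proposition~\ref{flm} applied with $M_1 = M$, $M_2 = \mathrm{id}$ — wait, $\mathrm{id}$ is not an SPM — so instead one uses Lemma~\ref{fix} repeatedly to climb up and conclude $M(w) = w$, the desired contradiction). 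Actually the cleanest route: the constant-on-atoms hypothesis with $M(x) = x\twc s$ non-fixing on at least one atom forces $s_i$, say, to be a descent of that atom, hence $\s_i \le w$, and then a short argument (or direct appeal to the structure of $\iota$) gives $s\in\DR(w)$.

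Once $s\in\DR(w)$ is established, Theorem~\ref{th:conjmatch} tells us that $N:\iotint{e}{w}\to\iotint{e}{w}$ defined by $N(x) = x\twc s$ is an SPM of $\iotint{e}{w}$. Now I would apply Proposition~\ref{flm} with $w_1 = w_2 = w$, $M_1 = M$, $M_2 = N$. The hypothesis of Proposition~\ref{flm} is exactly that $M_1(u) = M_2(u)$ for all $u\in\iotint{e}{w}$ with $\rho(u)\le 1$, i.e.\ $M(u) = u\twc s$ for all atoms $u$ and for $u = e$ — and $M(e) = e = e\twc s$ holds automatically since $e$ is the minimum and any SPM must send it to itself (it cannot go down, and if $e\prec M(e)$ then by the last SPM axiom applied along a chain one reaches a contradiction with $M^2 = \mathrm{id}$; in fact Proposition~\ref{pr:SPMrestrict} with $x$ an atom already pins this down). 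So the hypothesis of Proposition~\ref{flm} is satisfied, and we conclude $M(u) = N(u) = u\twc s$ for all $u\in\iotint{e}{w}$, which is the assertion.

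In summary, the proof is: (1) show the hypothesis forces $s\in\DR(w)$; (2) apply Theorem~\ref{th:conjmatch} to get that $x\mapsto x\twc s$ is an SPM of $\iotint{e}{w}$; (3) apply Proposition~\ref{flm} to the two SPMs $M$ and $x\mapsto x\twc s$, which agree on ranks $\le 1$ by hypothesis, to conclude they agree everywhere. I expect step (1) to be the only place requiring genuine thought — everything else is a direct citation. If handling step (1) cleanly proves awkward, an alternative is to observe that the corollary is only ever used with $s$ already known to be a descent of $w$, in which case one could simply add $s\in\DR(w)$ to the hypotheses; but it is more satisfying to note that $s\notin\DR(w)$ makes the non-fixing hypothesis on atoms vacuously unsatisfiable whenever $\iotint{e}{w}$ has an atom, because then $x\twc s = x$ for every atom $x$ (as $s\notin\DR(x)$ for any $x\le w$ with $\rho(x)=1$), forcing $M$ to fix all atoms and hence, by Lemma~\ref{fix} and induction on rank, to fix $w$ itself, contradicting $M(\hat 1)\prec\hat 1$.
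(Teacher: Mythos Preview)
Your overall plan—compare $M$ with the conjugation map $x\mapsto x\twc s$ via Proposition~\ref{flm}—is exactly right, but your execution of step~(1) contains several errors, and the paper in fact \emph{does not} carry out step~(1) at all.

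First, the errors. The implication ``$s\notin\DR(w)$ and $w\in\iota$ $\Rightarrow$ $w\twc s = w$'' is false; one may perfectly well have $w\twc s > w$. The claim that $s\notin\DR(w)$ forces $s\notin\DR(x)$ for every atom $x\le w$ is also false: descents do not propagate downward in Bruhat order (for instance $s_1\in\DR(\s_1)$ while $s_1\notin\DR(\hat w)$, the maximum of $\br(\iota)$, since $\hat w$ has only even-indexed descents). And even when $s\notin\DR(x)$ for an atom $x$, it does not follow that $x\twc s = x$; typically $x\twc s$ then lies strictly above $x$. Finally, an SPM need not fix $e$: the conjugation SPM $x\mapsto x\twc s_i$ sends $e$ to $\s_i\neq e$ whenever $i\neq n$. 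So none of your proposed arguments for step~(1) goes through.

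The paper sidesteps this entirely by exploiting the full strength of Proposition~\ref{flm}, which allows $w_1\neq w_2$. Rather than proving $s\in\DR(w)$, the paper produces some $w_2\ge w$ with $s\in\DR(w_2)$: the maximum $\hat w$ of $\br(\iota)$ has exactly the even-indexed $s_i$ as descents, and by embedding $S_{2n}$ into $S_{2n+2}$ one obtains a maximum $\hat w'$ with exactly the odd-indexed descents, so one of these two serves as $w_2$. Theorem~\ref{th:conjmatch} then yields a conjugation SPM on $\iotint{e}{w_2}$, and Proposition~\ref{flm} applied to $M_1=M$ on $\iotint{e}{w}$ and $M_2=(\cdot\twc s)$ on $\iotint{e}{w_2}$ gives agreement on the intersection $\iotint{e}{w}$. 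Note that $s\in\DR(w)$ then falls out \emph{a posteriori}, since $M(w)=w\twc s$ and $M(w)\prec w$.
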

\begin{proof}
The result follows from Proposition \ref{flm} if we are able to
construct $w_2\in \iota$ with $s\in \DR(w_2)$ such that $w_2 \ge w_1 =
w$. Observe that $s_i\in \DR(\hat{w})$ if and only if $i$ is even,
where $\hat{w} = w_0s_1s_3\cdots s_{2n-1}$ denotes the maximum of
$\br(\iota)$.

We may consider $W$ to be embedded in the symmetric group of
permutations of $\{0,1,\ldots,2n+1\}$ with generators $S' = S\cup
\{s_0 = \trp{0}{1},s_{2n}=\trp{2n}{2n+1}\}$ on which we have the automorphism $\theta'$ given by
$s_i \mapsto s_{2n-i}$. Then $\theta'$ restricts to $\theta$ on $W$
and $\iota$ embeds in $\iota(\theta')$. Moreover, the maximum of
$\iota(\theta')$, call it $\hat{w}'$, has $s_i$ as descent if and only if $i$ is
odd. Hence, either $w_2 = \hat{w}$ or $w_2= \hat{w}'$ does the job.
\end{proof}

The recurrence relation in Proposition \ref{pr:Qrec} relies on a
conjugation SPM. Our goal is to replace it with an arbitrary SPM in
order to arrive at a combinatorially defined recurrence for the
$Q$-polynomials. In order to do so, we need a good understanding of
non-conjugation SPM behaviour. The next lemma imposes strong restrictions on
such partial matchings.

\begin{lem}\label{M2}
Suppose $M$ is an SPM of $\iotint{e}{w}$ for $w\in \iota$. If
$M$ is not a conjugation SPM, then $M(e)=\s_i$ for some $2\le i\le n-2$ and one
of the following two sets of conditions holds:
\begin{enumerate}
\item $M(\s_{i-1}) = \s_{i-1}\s_i$, $M(\s_{i+1}) = \s_i\s_{i+1}$ and
  $\s_{i+1}\s_i\s_{i-1} \not \le w$. 
\item $M(\s_{i-1}) = \s_i\s_{i-1}$, $M(\s_{i+1}) = \s_{i+1}\s_i$ and
  $\s_{i-1}\s_i\s_{i+1} \not \le w$. 
\end{enumerate}
\end{lem}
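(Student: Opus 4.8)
The plan is to reduce everything to how $M$ behaves at ranks $0$ and $1$. By Corollary \ref{co:conjSPM}, $M$ is completely determined by its restriction to the set consisting of $e$ and the atoms $\s_j\le w$ of $\iotint{e}{w}$, and $M$ is a conjugation SPM exactly when this restriction equals $x\mapsto x\twc s$ for some $s\in S$. Since $e$ is the minimum, $M(e)$ is $e$ or an atom, and I first dispose of the former. Suppose $M(e)=e$ and some atom $\s_j$ satisfies $M(\s_j)\tr\s_j$. By the Subword Property of $\twist$ (Theorem \ref{th:twsubword}) the rank-two element $M(\s_j)$ covers either $\s_j$ alone or $\s_j$ and exactly one further atom $\s_m$; in the latter case $M$ restricts, by Proposition \ref{pr:SPMrestrict}, to an SPM of the four-element diamond $\iotint{e}{M(\s_j)}=\{e,\s_j,\s_m,M(\s_j)\}$, forcing $M(\s_m)=\s_m$ (using $M(e)=e$) and hence, by the fourth clause of Definition \ref{de:SPM} applied to $\s_m\tl M(\s_j)$, the impossibility $\s_m=M(\s_m)<M(M(\s_j))=\s_j$. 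So $M(\s_j)$ covers $\s_j$ alone, whence $\s_j=\s_{n-1}$ and $M(\s_{n-1})=\s_{n-1}\s_n$ by Lemma \ref{manycoat}. Therefore either $M$ fixes every rank-$\le1$ element of $\iotint{e}{w}$ — in which case a straightforward induction on rank, using Lemma \ref{manycoat} to exclude $M(x)\tr x$, gives $M=\mathrm{id}$, contradicting $M(\hat 1)\tl\hat 1$ — or $M$ agrees at ranks $0$ and $1$ with conjugation by $s_n$ (note $\s_{n-1}\s_n=\s_{n-1}\twc s_n$ and $\s_j\twc s_n=\s_j$ for $j\le n-2$), so $M$ is a conjugation SPM, contrary to hypothesis. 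Hence $M(e)=\s_i$ for some atom $\s_i$, and so $M(\s_i)=e$.

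Let $\s_j$ be any atom of $\iotint{e}{w}$ with $j\ne i$. The fourth clause of Definition \ref{de:SPM} applied to $e\tl\s_j$ gives $\s_i=M(e)<M(\s_j)$; as distinct atoms are incomparable this excludes $M(\s_j)\in\{e,\s_j\}$, so $M(\s_j)$ has rank two and covers both $\s_i$ and $\s_j$. By the Subword Property such a common cover has a reduced $\Ss$-expression $\s_a\s_b$ with $\{a,b\}=\{i,j\}$; for $|i-j|\ge2$ the two choices coincide in a single element $\s_i\s_j$, whereas for $j=i-1$ (resp.\ $j=i+1$) they give the two distinct elements $\s_{i-1}\s_i$, $\s_i\s_{i-1}$ (resp.\ $\s_i\s_{i+1}$, $\s_{i+1}\s_i$). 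A short computation with twisted conjugation shows that $s_i$ and $s_{2n-i}$ are the only generators $s$ with $e\twc s=\s_i$, that $\s_j\twc s_i=\s_j\twc s_{2n-i}=\s_i\s_j$ whenever $|i-j|\ge2$, that $\s_{i-1}\twc s_i=\s_{i-1}\s_i$ and $\s_{i+1}\twc s_i=\s_{i+1}\s_i$, and that $\s_{i-1}\twc s_{2n-i}=\s_i\s_{i-1}$ and $\s_{i+1}\twc s_{2n-i}=\s_i\s_{i+1}$. Thus $M$ coincides with each of the two conjugation SPMs $x\mapsto x\twc s_i$ and $x\mapsto x\twc s_{2n-i}$ at $e$ and at every atom except possibly $\s_{i-1}$ and $\s_{i+1}$, so, since $M$ is not a conjugation SPM, Corollary \ref{co:conjSPM} forces $M$ to differ from each of the two at some atom. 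Checking the options, this is possible only if $\s_{i-1}$ and $\s_{i+1}$ are both atoms of $\iotint{e}{w}$ — in particular $2\le i\le n-2$ — and $(M(\s_{i-1}),M(\s_{i+1}))$ equals either $(\s_{i-1}\s_i,\s_i\s_{i+1})$ or $(\s_i\s_{i-1},\s_{i+1}\s_i)$: exactly the data of (1) and of (2).

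It remains to prove the non-domination clause. I do this for case (1); case (2) follows upon applying the poset automorphism $x\mapsto x^{-1}$ of $\br(\iota)$, which fixes $e$ and every atom and interchanges $\s_{i-1}\s_i\leftrightarrow\s_i\s_{i-1}$, $\s_i\s_{i+1}\leftrightarrow\s_{i+1}\s_i$ and $\s_{i+1}\s_i\s_{i-1}\leftrightarrow\s_{i-1}\s_i\s_{i+1}$. Assume for a contradiction that $\tau:=\s_{i+1}\s_i\s_{i-1}\le w$. By the Subword Property $\tau$ covers precisely the rank-two elements $\s_{i+1}\s_i$, $\s_i\s_{i-1}$ and $\s_{i-1}\s_{i+1}$, which therefore all lie in $\iotint{e}{w}$. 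From $M(\s_{i-1})=\s_{i-1}\s_i$, $M(\s_{i+1})=\s_i\s_{i+1}$, $M(\s_i)=e$ and the fourth clause of Definition \ref{de:SPM} one obtains $M(\s_{i+1}\s_i)=\s_i\s_{i+1}\s_i$ and $M(\s_i\s_{i-1})=\s_{i-1}\s_i\s_{i-1}$; in each case the clause forces $M$ of the rank-two element to strictly dominate a prescribed rank-two element, and the Subword Property pins down the unique rank-three element doing so. Since both $\s_i\s_{i+1}\s_i$ and $\s_{i-1}\s_i\s_{i-1}$ differ from $\tau$, applying the fourth clause to the covers $\s_{i+1}\s_i\tl\tau$ and $\s_i\s_{i-1}\tl\tau$ yields $\s_i\s_{i+1}\s_i<M(\tau)$ and $\s_{i-1}\s_i\s_{i-1}<M(\tau)$, so $M(\tau)$ has rank $4$. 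But then a reduced $\Ss$-expression of $M(\tau)$, which has length $4$, must contain both $\s_i\s_{i+1}\s_i$ and $\s_{i-1}\s_i\s_{i-1}$ as subwords; since $\s_{i-1},\s_i,\s_{i+1}$ are pairwise distinct this requires at least $2+2+1=5$ letters, a contradiction. Hence $\tau\not\le w$, which finishes the proof. The step I anticipate to be most delicate is the bookkeeping in the middle paragraph — matching the rank-two covers of atoms to the two twisted-conjugation patterns and ruling out all other possibilities for $M$ on the atoms; once the interval below $\tau$ is in play, the concluding length count is short.
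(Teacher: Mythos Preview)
Your overall strategy matches the paper's: analyse $M$ on ranks $0$ and $1$, use Corollary~\ref{co:conjSPM} to detect conjugation SPMs, and then derive a contradiction from $\tau=\s_{i+1}\s_i\s_{i-1}\le w$. The first two paragraphs are essentially correct (a small quibble: the ``$M=\mathrm{id}$'' step is exactly Lemma~\ref{fix}, not Lemma~\ref{manycoat}).

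The final counting argument, however, is wrong. You claim that a length-$4$ reduced $\Ss$-expression cannot contain both $\s_i\s_{i+1}\s_i$ and $\s_{i-1}\s_i\s_{i-1}$ as subwords, because ``$2+2+1=5$'' letters are needed. This forgets the braid relation: the element $\s_{i-1}\s_i\s_{i-1}$ equals $\s_i\s_{i-1}\s_i$, and the Subword Property (Theorem~\ref{th:twsubword}) only requires \emph{some} reduced expression of the smaller element to occur as a subword. Concretely, the rank-$4$ element $\s_i\s_{i+1}\s_{i-1}\s_i=\s_i\s_{i-1}\s_{i+1}\s_i$ dominates both $\s_i\s_{i+1}\s_i$ (positions $1,2,4$) and $\s_i\s_{i-1}\s_i$ (positions $1,3,4$). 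So there \emph{is} a rank-$4$ element above both, and your contradiction evaporates.

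What actually fails is that this unique common cover does not lie above $\tau$. The paper's route is: from $M(\s_{i+1}\s_i)=\s_i\s_{i+1}\s_i$ and $M(\s_i\s_{i-1})=\s_i\s_{i-1}\s_i$ (which you correctly obtain), the fourth SPM clause forces $M(\tau)$ to cover both; Lemma~\ref{lem2} then pins down $M(\tau)=\s_i\s_{i-1}\s_{i+1}\s_i$. The contradiction is that $\tau=\s_{i+1}\s_i\s_{i-1}\not\le \s_i\s_{i-1}\s_{i+1}\s_i$ (the unique reduced $\Ss$-expression of $\tau$ is not a subword of either reduced expression of $\s_i\s_{i-1}\s_{i+1}\s_i$), whereas an SPM requires $\tau\tl M(\tau)$. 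Replacing your length count with this step repairs the proof.
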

\begin{proof}
Let $M$ be an SPM of $\iotint{e}{w}$. 

First, consider the case
$M(e)=e$. We cannot have $M(\s_i)=\s_i$ for every $\s_i\le w$ because Lemma
\ref{fix} would then imply that $M(u) = u$ for all $u\le w$
contradicting that $M$ is an SPM. Hence $M(\s_i)=\s_i\s_j$ for some
$i$ and $j$. Then $x\tl \s_i\s_j$, $x\in \iota$, can only happen if $x = \s_i$; otherwise
$M(x)\tl x$ which is impossible. By Lemma \ref{manycoat}, this means $\s_i = \s_{n-1} = \s_{n+1}$ and
$j=n$. In other words, $M(u) = u\twc s_n$ for all $u\in \iotint{e}{w}$ with
$\rho(u)\le 1$, whence $M$ is a conjugation SPM by Corollary \ref{co:conjSPM}.    

Second, suppose $M(e) = \s_i$, $i<n$. For $i\neq j<n$, the only
elements which cover both $\s_i$ and $\s_j$ are $\s_i\s_j =
\s_j\s_{2n-i}$ and $\s_j\s_i = \s_i\s_{2n-j}$; they coincide if and
only if $i\neq j\pm 1$. If $M$ is neither twisted conjugation by $s_i$
nor by $s_{2n-i}$ we must therefore have $M(\s_j) = \s_i\s_j \neq \s_j\s_i$ and
$M(\s_k) = \s_k\s_i \neq \s_i\s_k$ for some $j,k<n$, $\s_j,\s_k\le w$. This is
only possible if $\{j,k\} = \{i-1,i+1\}$. In
particular, $2\le i\le n-2$. Assume $M(\s_{i-1})=\s_{i-1}\s_i$ and
$M(\s_{i+1})=\s_i\s_{i+1}$, the other case being entirely
similar. Suppose in order to obtain a contradiction
$\s_{i+1}\s_i\s_{i-1}\le w$. By the subword property,
$\s_{i+1}\s_i\leq w$ and $\s_i\s_{i-1}\leq w$. Since $M(\s_{i+1}\s_i)$
covers both $\s_{i+1}\s_i$ and $\s_i\s_{i+1}$,
$M(\s_{i+1}\s_i)=\s_i\s_{i+1}\s_i$. Similarly,
$M(\s_i\s_{i-1})=\s_i\s_{i-1}\s_i$. Now, $M(\s_{i+1}\s_i\s_{i-1})$
covers $\s_i\s_{i+1}\s_i$ and $\s_i\s_{i-1}\s_i$, so
$M(\s_{i+1}\s_i\s_{i-1}) = \s_i\s_{i-1}\s_{i+1}\s_i$ by Lemma
\ref{lem2}. This is however impossible since $\s_{i+1}\s_i\s_{i-1}\nleq \s_i\s_{i-1}\s_{i+1}\s_i$.
\end{proof}

A conjugation SPM may have fixed points. The upcoming
proposition, however, states that any non-conjugation
SPM is fixed point free and commutes with some fixed point free
conjugation SPM.

\begin{prop} \label{com}
Let $w\in \iota$ and assume $M$ is an SPM of $\iotint{e}{w}$ which is not a
conjugation SPM. Then $M$ has no fixed point. Moreover, there
exists $s\in D_{R}(w)$ such that $w\twc s \neq M(w)$ and, furthermore, $u\twc s\neq u$ and $M(u\twc s) =
M(u)\twc s$ for all $u\in \iotint{e}{w}$.
\end{prop}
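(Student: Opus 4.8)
The plan is to exploit the tight combinatorial constraints on non-conjugation SPMs provided by Lemma \ref{M2}. By that lemma we know $M(e) = \s_i$ for some $2 \le i \le n-2$, and up to the left-right symmetry we are in case (1): $M(\s_{i-1}) = \s_{i-1}\s_i$, $M(\s_{i+1}) = \s_i\s_{i+1}$, and $\s_{i+1}\s_i\s_{i-1} \not\le w$. The natural candidate for $s$ is the generator whose conjugation SPM ``interacts compatibly'' with $M$ on the atoms; given the pattern $M(\s_{i-1}) = \s_{i-1}\s_i = \s_{i-1}\twc s_i$ and $M(\s_i) = e = \s_i \twc s_i$, I expect $s = s_i$ to be the right choice (one must check $s_i \in \DR(w)$, which should follow because $\s_i \le w$ and, via the subword property together with $\s_{i+1}\s_i\s_{i-1}\not\le w$, $w$ cannot avoid having $s_i$ as a descent; alternatively use Lemma \ref{le:diag} to see that $\s_{i+1}\s_i\s_{i-1}\not\le w$ forces structural conditions on $w$ pinning down its descent set near position $i$). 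Once $s$ is fixed, $w\twc s \ne M(w)$ should be automatic: $M(w) \ne w\twc s$ because if they were equal then $M$ would agree with the conjugation SPM by $s$ on $w$, and combined with agreement on the atoms this would force $M$ to be that conjugation SPM via Corollary \ref{co:conjSPM}, contradicting the hypothesis.

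Next I would prove the two main assertions simultaneously by induction on $\rho(u)$. The claim is: for every $u \in \iotint{e}{w}$ we have (a) $u\twc s \ne u$, and (b) $M(u\twc s) = M(u)\twc s$; in particular the fixed-point-freeness of $M$ falls out as a corollary, since if $M(u) = u$ then applying (b) with $u' = u\twc s$ (noting $u'\twc s = u$) gives $M(u') = u'$, and propagating down a saturated chain one eventually reaches an atom or the identity, where we have explicit information contradicting a fixed point. The base cases $\rho(u) \le 1$ are handled directly from the description in Lemma \ref{M2}: $e\twc s = \s_i \ne e$, $\s_i \twc s = e \ne \s_i$, $\s_{i\pm1}\twc s = \s_{i\pm1}\s_i \ne \s_{i\pm1}$, and for any other atom $\s_j \le w$ with $j \notin \{i-1,i,i+1\}$ we have $s_j$ commuting with $s_i = s$, and I'd check $M(\s_j) = \s_j\twc s$ there too — this is where the SPM axioms for $M$ at rank $\le 1$, cross-referenced with the cover structure, must be squeezed; Lemma \ref{M2} already tells us $M$ behaves like twisted conjugation by $s_i$ or $s_{2n-i}$ on such atoms, and one picks out $s_i$ consistently. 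Relations (a), (b) at rank $1$ then say precisely that the conjugation SPM $x \mapsto x\twc s$ (an SPM of $\iotint{e}{w_2}$ for a suitable $w_2 \ge w$ with $s \in \DR(w_2)$, as in the proof of Corollary \ref{co:conjSPM}) and $M$ \emph{commute on atoms}.

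For the inductive step the idea is that both $M$ and $N := (x \mapsto x\twc s)$ are SPMs (of $\iotint{e}{w}$ and of $\iotint{e}{w_2}$ respectively, with $w \le w_2$), so the composite behaviour $M \circ N$ versus $N \circ M$ can be controlled rank by rank. Concretely, fix $u$ with $\rho(u) \ge 2$ and let $v = u\twc s = N(u)$; by induction hypothesis $M$ and $N$ commute strictly below $\rho(u)$. To show $v \ne u$ I would argue that if $v = u$ (i.e. $s \notin \DR(u)$ and $u\twc s = u$), then for any cover $x \tl u$ with $x \ne M(\ )$-image issues aside, applying the SPM axiom to the pair $x \tl u$ under $N$ on $\twist$ forces, via Lemma \ref{lift} and Lemma \ref{le:twistiotacover}, some lower-rank element to be fixed by $N$, contradicting (a) at lower rank; the cleanest route is probably: $N(u) = u$ with $u \in \iota$, $\rho(u)\ge 2$ means (by the remark after Lemma \ref{lift}, ``$w\twc s = w$ for $w\in\iota$ implies $s\notin\DR(w)$'' turned around) that $u$ lies in a rank-three $\{\s,\text{nothing}\}$ configuration forcing some $\s$-fixed element lower down. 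For (b), consider a cover $x \tl u$; by the SPM axiom for $M$ we have $M(x) < M(u)$ unless $M(x) = u$, and similarly one tracks $N(x) \tl N(u) = v$ (by the lifting property, using $s \in \DR(w_2)$). Since by induction $M(N(x)) = N(M(x))$ and $x$ ranges over all lower covers of $u$, the set of lower covers of $M(v)$ and of $N(M(u))$ coincide: explicitly, $\{$lower covers of $M(N(u))\} \supseteq \{M(N(x)) : x \tl u\} = \{N(M(x)) : x\tl u\} = \{$lower covers of $N(M(u))\}$ after accounting for the exceptional cover. Hence $M(v)$ and $N(M(u))$ are two twisted identities covering the same set; by Proposition \ref{samecoat} they are equal provided that common rank is $\ge 3$, i.e. $\rho(u) \ge 2$, which is our standing assumption — the small-rank cases $\rho(M(v)) \le 2$ being finitely many and checked by hand using the explicit description of $M$ near $e$ from Lemma \ref{M2}.

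The main obstacle I anticipate is the bookkeeping of the ``exceptional'' covers in the inductive step — the cover $x \tl u$ with $M(x) = u$ (so $M$ sends $x$ up to $u$) and the analogous $N$-exceptional cover — and making sure the lower-cover sets of $M(v)$ and $N(M(u))$ really do match after these exceptions are removed on both sides. This requires carefully distinguishing whether $u\twc s$ lies above or below $u$ and whether $M(u)$ lies above or below $u$ (the case $M(u)\tr u$ being the genuinely delicate one, paralleling Case 3 of Proposition \ref{flm}), and invoking Lemma \ref{le:twistiotacover} to rule out a non-$\iota$ element sneaking in as a cover. The rest — the base case, the reduction to Lemma \ref{M2}'s two cases, the deduction of fixed-point-freeness, and the verification that $s \in \DR(w)$ — I expect to be routine given the machinery already assembled in Sections \ref{se:iota} and \ref{se:SPM}.
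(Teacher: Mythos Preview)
Your approach diverges from the paper's and contains a genuine error. The paper does \emph{not} attempt an abstract induction via Proposition \ref{samecoat}. Instead it uses Lemma \ref{le:diag} to deduce that every $u\in\iotint{e}{w}$ has a very constrained diagram (two dots in each of a pair of small regions $A,A'$, or two dots in a central region $B$), then proves by an induction on rank that $M$ coincides with the explicit ``box cover'' operator $b$ that swaps the four dots in $A\cup A'$. Once $M=b$ is known, fixed-point-freeness and commutation with a suitable conjugation SPM are read off directly from the diagram picture. The descent $s$ is chosen as $\theta(s_k)$ for an appropriate $k\in[i]$, not as $s_i$.

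Your choice $s=s_i$ fails already in small examples. Take $n=4$, $i=2$, and $w=\s_1\s_2\s_3$. One checks that $\iotint{e}{w}$ admits a non-conjugation SPM with $M(e)=\s_2$, $M(\s_1)=\s_1\s_2$, $M(\s_3)=\s_2\s_3$, $M(\s_1\s_3)=w$ (this is case (1) of Lemma \ref{M2}, and $\s_3\s_2\s_1\not\le w$ since both have rank $3$ and are distinct). But $w=23416785$ has $w(2)=3<4=w(3)$, so $s_2\notin\DR(w)$. The paper's choice here is $s=\theta(s_1)=s_7$, which \emph{is} a descent. Your hand-waving that ``$\s_i\le w$ together with $\s_{i+1}\s_i\s_{i-1}\not\le w$ forces $s_i\in\DR(w)$'' is simply false.

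Separately, your deduction of fixed-point-freeness is circular. From $M(u)=u$ and the commutation relation (b) you get $M(u\twc s)=u\twc s$, but $u\twc s$ may lie \emph{above} $u$, and applying the relation again just returns you to $u$; there is no mechanism to ``propagate down a saturated chain'' to an atom. Lemma \ref{fix} only tells you that every $v\tl u$ has $M(v)\le v$, which is perfectly compatible with $M(v)\tl v$ for all such $v$. The paper avoids this entirely: once $M=b$ and every $u\le w$ is shown to be of type $4$, fixed-point-freeness is immediate because $b$ visibly moves the four dots in $A\cup A'$.
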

\begin{proof}
Lemma \ref{M2} shows that if $M$ is a non-conjugation SPM of $\iotint{e}{w}$, $M(e) =
\s_i$ for some $2\le i \le n-2$, and either $M(\s_{i+1}) = \s_i\s_{i+1}$ and $M(\s_{i-1}) =
\s_{i-1}\s_i$ or else $M(\s_{i+1}) = \s_{i+1}\s_i$ and $M(\s_{i-1}) =
\s_i\s_{i-1}$. Replacing $w$ with $w^{-1}$ if necessary, we may
assume the former situation is at hand. It follows that
$\s_{i+1}\s_i\s_{i-1} \not \le w$. Hence, Lemma \ref{le:diag} implies
that every element in $\iotint{e}{w}$ has a permutation diagram of the form
illustrated in Figure \ref{fi:diag}.

\begin{figure}[htb]
\includegraphics[scale=0.5]{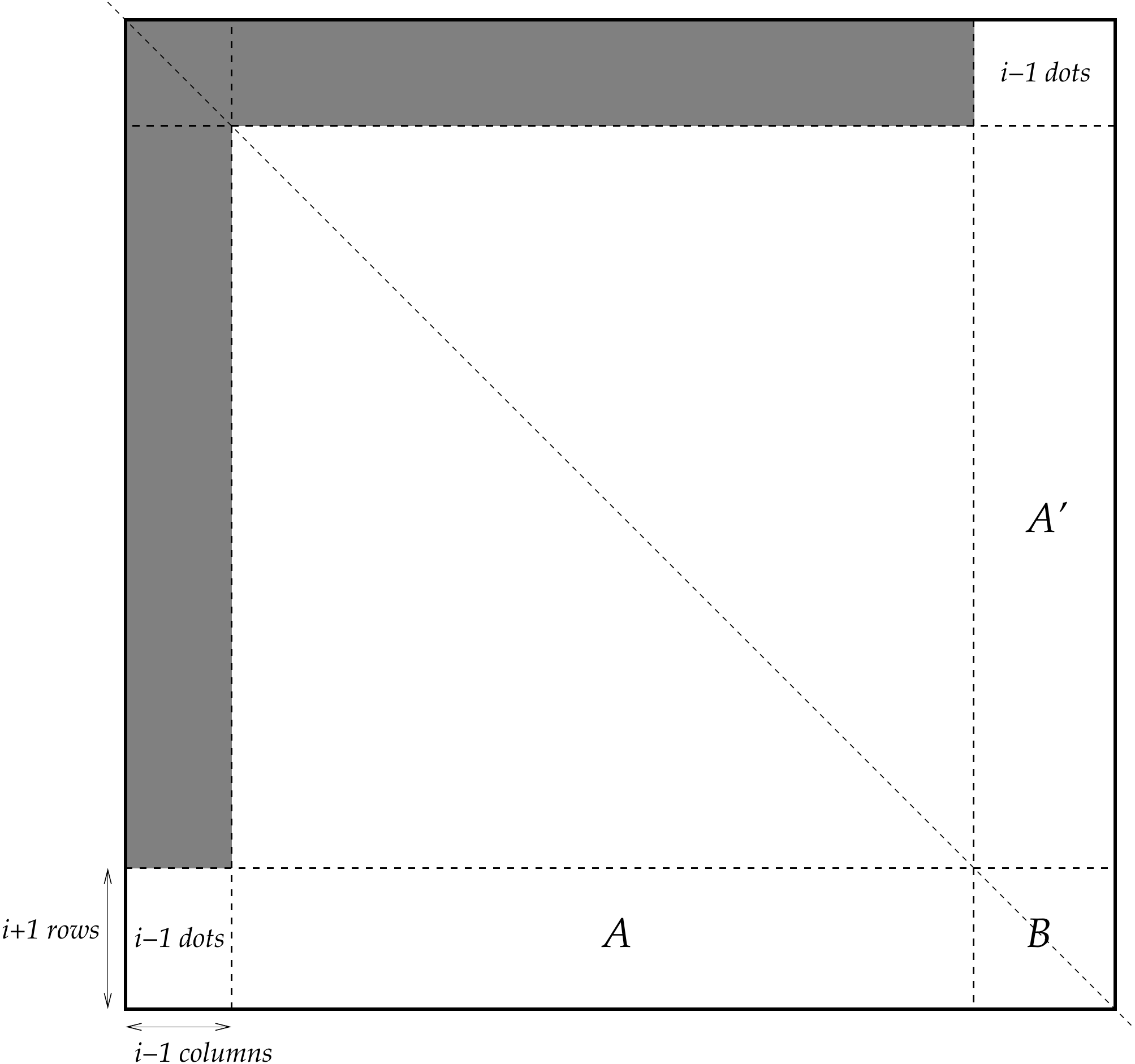}
\caption{Illustration for the proof of Proposition
  \ref{com}. Permutation diagrams of elements in $\iotint{e}{w}$ have
  the depicted form. Shaded regions are empty.} \label{fi:diag}
\end{figure}

For any $u\in \iotint{e}{w}$, its diagram either contains two dots in each of $A$
and $A'$, and $B$ is empty, or else $B$ contains two dots and both $A$ and $A'$ are empty. Say $u$ is of {\em type
  $4$} in the former case and {\em type $2$} in the latter; i.e.\ the
type indicates the total number of dots in $A\cup A'\cup B$.

Recall the description of $\tl$ from Figure \ref{fi:incitti}. Let us
say that a covering of the form depicted in the lower left picture is
produced by a {\em box cover transformation} which {\em involves} the dots which are indicated in
the picture, i.e.\ those not shared by the two diagrams. 

\smallskip

\noindent {\bf Claim.} If $u$ is of type $4$, $M(u)$ is obtained from $u$ by a
box cover transformation which involves the four dots in $A\cup
A'$, whereas $M(u) = u$ if $u$ is of type $2$.

\smallskip

The claim is readily verified if $\rho(u)\le 1$. In order to prove it in general,
we assume $\rho(u)\ge 2$ and induct on $\rho(u)$. 

Suppose first that $u$ is of type $2$. Recalling from Figure \ref{fi:incitti} the description of
the cover relation in $\iota$, it is clear that $v\tl u$
implies $v$ is of type $2$ or of type $4$ with the dots in $A'$
forming a fall. The induction assumption shows that $M(v) = v$ (in the
former case) or $M(v) \tl v$ (in the latter). By Lemma \ref{fix}, $M(u)
= u$ as desired.

Now assume $u$ is of type $4$. Let $b$ denote the operator which
acts on elements of type $4$ by applying a box cover transformation
involving the dots in $A$ and $A'$. The induction assumption
implies $b(v)=M(v)$ for all $u\neq v \in \iotint{e}{u}$ since all such $v$ are of type $4$ by
Theorem \ref{th:stdcrit}. It must be shown that $b(u)=M(u)$. If $b(u)<u$
or $M(u) < u$ we are done by induction, so suppose $b(u) > u$ and
$M(u) \ge u$. In order to obtain a contradiction, assume $M(u) \neq
b(u)$. It suffices to find $x\tl M(u)$, $x\neq u$, such that $b(x)>
x$; since $M$ is an SPM it would satisfy $M(x)\tl x$ (if $M(u) \tr u$) or
$M(x) \le x \tl u$ (if $M(u) = u$), so that the induction assumption
implies $b(x) = M(x)$ which is the needed contradiction.

Consider the diagram of $M(u)$ as depicted in Figure \ref{fi:diag}. If $b(M(u))
< M(u)$, $x=b(M(u))$ has the desired properties. Hence, we may assume
the dots in $A$ form a rise, as do those in $A'$. By
Proposition \ref{pr:SPMrestrict}, $M(u)\ge \s_i = M(e)$. This implies
$\s_{i-1}\le M(u)$ or $\s_{i+1}\le M(u)$, since otherwise $s_i$
would be a descent of $M(u)$ corresponding to a fall in $A$.

First, if $\s_{i-1}\le M(u)$, the dots in $A'$ are not in the two leftmost
columns by Theorem \ref{th:stdcrit}. Therefore there exists $k\in [i]$ such that $\theta(s_k)\in
\DR(M(u))$, and this descent involves exactly one dot in $A'$ (by
which we mean $M(u)(2n-k)>2n+1-i$ and $M(u)(2n+1-k)\le 2n+1-i$). It is
clear that $b(M(u)\twc \theta(s_k)) > M(u)\twc \theta(s_k)$.

Second, if $\s_{i+1}\le M(u)$, the dots in $A$ are not in the two leftmost
columns. Then, $s_j\in \DR(M(u))$ for some $i\le j \le 2n-i-2$ with
this descent involving exactly one dot in $A$ (i.e.\ $M(u)(j)>i+1$ and
$M(u)(j+1)\le i+1$), implying $b(M(u)\twc s_j) > M(u)\twc s_j$.

Now, if both $\s_{i-1}\le M(u)$ and $\s_{i+1}\le M(u)$, we note $M(u)\twc
\theta(s_k) \neq M(u)\twc s_j$ e.g.\ since the latter element coincides with
$M(u)$ on $[i-1]$ whereas the former does not. Thus, at least one of
them is not equal to $u$; let $x$ be this element.

Finally, let us consider the case $\s_{i+1}\le M(u)$, $\s_{i-1}\not\le
M(u)$ (the situation $\s_{i+1}\not\le M(u)$, $\s_{i-1}\le
M(u)$ being completely analogous). Let $x=M(u)\twc s_j$. If $M(u) =
u$, $x\neq u$ and we are done. If $u < M(u)$, $M$ restricts to an SPM
of $\iotint{e}{M(u)}$ by Proposition \ref{pr:SPMrestrict}.
  Since $\s_{i-1}\not\le M(u)$, Lemma \ref{M2} implies that the
  restriction is a conjugation SPM. Examining $M(e)$ and $M(s_{i+1})$,
  we conclude $M(v) = v\twc \theta(s_i)$ for all $v\in
  \iotint{e}{M(u)}$. Hence, $u = M(u) \twc \theta(s_i) \neq x$. The
  claim is established.

Now, Theorem \ref{th:stdcrit} implies that $w$ is of type $2$ whenever
$u$ is of type $2$ for some $u\le w$. Since $M(w) \neq w$ it follows from
the claim that every element in $\iotint{e}{w}$ must in fact be of type $4$. In
particular, $M$ coincides with $b$ which has no fixed point.

Just as in the proof of the claim, the fact that $\s_{i-1} \le w$ implies
$\theta(s_k)\in \DR(w)$ for some $i\in [k]$ with $w\twc
\theta(s_k) \neq b(w)$. Moreover, for $u \in \iotint{e}{w}$,
$u\twc \theta(s_k) = u$ would imply $u$ is of type $2$ which we have
just seen is impossible. Finally, it is not hard to see that $b(u)\twc
s = b(u\twc s)$ for any $s\in S$. In particular, $s=\theta(s_k)$ has
all the asserted properties. 
\smallskip
\end{proof}

\section{KLV polynomials}
Finally, we have gathered all ingredients that are necessary in order to prove the main result which
asserts that any SPM of $\iotint{e}{w}$ can be used in the recurrence
relation for
the $Q$-polynomials of the intervals $\iotint{u}{w}$. With the key SPM
properties from the previous section under the belt, the arguments
that remain are essentially identical to those employed by
Brenti in his proof of \cite[Theorem 5.2]{FB}.
\begin{theorem}\label{res}
Let $M$ be an SPM of $\iotint{e}{w}$. Then, for any $u\in \iotint{e}{w}$,
\[ 
Q_{u,w}(q)= 
\begin{cases} 
Q_{M(u),M(w)}(q) & \text{ if } M(u)\tl u, \\
qQ_{M(u),M(w)}(q)+(q-1)Q_{u,M(w)}(q) & \text{ if } M(u)\tr u, \\
qQ_{u,M(w)}(q) & \text{ if } M(u)= u.
\end{cases}
\]
\end{theorem}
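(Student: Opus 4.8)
The statement is that the $Q$-polynomial recurrence from Proposition~\ref{pr:Qrec}, originally formulated in terms of a conjugation SPM $x\mapsto x\twc s$, continues to hold when that conjugation SPM is replaced by an \emph{arbitrary} SPM $M$ of $\iotint{e}{w}$. The plan is to split into two cases according to whether $M$ is a conjugation SPM. If $M$ is a conjugation SPM, say $M(x)=x\twc s$ for all $x\in\iotint{e}{w}$, then $s\in\DR(w)$ by the third axiom of Definition~\ref{de:SPM} (which forces $M(w)\prec w$, i.e. $w\twc s\tl w$, which in $\iota$ means $s\in\DR(w)$), and the three cases $M(u)\tl u$, $M(u)\tr u$, $M(u)=u$ translate verbatim into $u\twc s\tl u$, $u\twc s\tr u$, $u\twc s=u$, so the claimed formula is literally Proposition~\ref{pr:Qrec}. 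Nothing to prove there.

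The substance is the non-conjugation case, and here I would invoke Proposition~\ref{com}: there exists $s\in\DR(w)$ with $w\twc s\neq M(w)$, such that $u\twc s\neq u$ for all $u\in\iotint{e}{w}$ and $M(u\twc s)=M(u)\twc s$. So $M$ commutes with the conjugation SPM $c\colon x\mapsto x\twc s$, and $c$ is fixed-point-free on $\iotint{e}{w}$; also by Proposition~\ref{com} $M$ itself is fixed-point-free, so the third case $M(u)=u$ never occurs. The strategy is then the standard Brenti-style argument: apply Proposition~\ref{pr:Qrec} with this particular $s$ to rewrite $Q_{u,w}$ in terms of $Q$-polynomials of the interval $\iotint{e}{w\twc s}$ and of the pair $(u\twc s,w\twc s)$, then note that $M$ restricts (via Proposition~\ref{pr:SPMrestrict}, since $M(w\twc s)=M(w)\twc s<w\twc s$ because $M(w)<w$ and $c$ preserves the order strictly here — or rather because $\rho(M(w)\twc s)\le\rho(w)-1+1$; one checks $M(w)\twc s\tl w\twc s$ from $M(w)\tl w$ and the lifting property) to an SPM of the smaller interval $\iotint{e}{w\twc s}$, apply the theorem inductively on $\rho(w)$ there to pull $M$ through, and finally reassemble using Proposition~\ref{pr:Qrec} once more (now in the form appropriate to $M(w)$ in place of $w$) together with $Q_{M(u\twc s),M(w\twc s)}=Q_{M(u)\twc s,M(w)\twc s}$. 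The bookkeeping reduces to verifying that the three sign/weight combinations $M(u)\tl u$ versus $M(u)\tr u$ are preserved under the commuting conjugation $c$ — i.e. that $M(u)\tl u\iff M(u\twc s)\tl u\twc s$ and similarly for $\tr$ — which follows because $c$ is order-preserving on $\iotint{e}{w}$ in the relevant sense (the lifting property guarantees $x\tl y\Rightarrow x\twc s$ and $y\twc s$ are comparable with the rank shifting by exactly the same amount), and then matching terms.

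The hard part is precisely the case analysis that makes the reassembly go through: one has to track, for each $u$, whether $u\twc s < u$ or $u\twc s>u$, and whether $M(u)<u$ or $M(u)>u$, and check that in every one of the resulting combinations the two routes — (i) apply $M$-recurrence then $s$-recurrence, (ii) apply $s$-recurrence then $M$-recurrence inductively — yield the same polynomial identity. This is exactly the combinatorial core of Brenti's proof of \cite[Theorem~5.2]{FB}, and since Proposition~\ref{com} has handed us a commuting fixed-point-free conjugation SPM $c$ together with the intertwining relation $M\circ c = c\circ M$, the argument is formally identical to his; I would carry it out by fixing the induction on $\rho(w)$, dispatching the base cases $\rho(w)\le 1$ by direct inspection, and then running the two-step rewrite in each of the (finitely many) configurations, using Proposition~\ref{pr:SPMrestrict} to license the inductive application of the theorem on $\iotint{e}{w\twc s}$ and on $\iotint{e}{M(w)}$. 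No genuinely new ingredient beyond Section~\ref{se:SPM} is needed; the obstacle is purely the disciplined enumeration of cases.
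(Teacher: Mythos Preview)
Your proposal is correct and follows essentially the same approach as the paper: reduce to Proposition~\ref{pr:Qrec} in the conjugation case, and in the non-conjugation case use Proposition~\ref{com} to obtain a commuting fixed-point-free conjugation SPM $x\mapsto x\twc s$, then induct on $\rho(w)$ by applying Proposition~\ref{pr:Qrec}, the inductive hypothesis on $\iotint{e}{w\twc s}$ (licensed by Proposition~\ref{pr:SPMrestrict}), the commutation $M(u\twc s)=M(u)\twc s$, and Proposition~\ref{pr:Qrec} again with $M(w)$ in place of $w$. One minor correction: the induction's base case is not ``$\rho(w)\le 1$'' but rather the conjugation-SPM case itself, which by Lemma~\ref{M2} absorbs all $\rho(w)\le 2$; and it is the \emph{second} axiom of Definition~\ref{de:SPM}, not the third, that gives $M(w)\prec w$.
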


\begin{proof}
This is just Proposition \ref{pr:Qrec} if $M$ is a conjugation SPM, so suppose it is not; in
particular $\rho(w)\ge 3$ by Lemma \ref{M2}. We induct
on $\rho(w)$. By Proposition
        \ref{com}, there exists $s\in \DR(w)$, $M(w)\neq w\twc s$, such that the conjugation SPM given by $s$ commutes
        with $M$ and fixes no element in $\iotint{e}{w}$. Let $u\in\iotint{e}{w}$. Proposition \ref{com} shows $M(u)\neq u$, so we consider two cases: \newline 
	\textbf{Case 1:} $M(u)\tl u$. We need to show that
        $Q_{u,w}(q)=Q_{M(u),M(w)}(q)$. \newline  Suppose first that
        $s\in \DR(u)$. Then, either $s\in \DR(M(u))$ or $M(u)=u\twc
        s$. If $s\in \DR(M(u))$, $M(u\twc s)=M(u)\twc s\tl u\twc
        s$. Therefore,
\[
Q_{u,w}(q)=Q_{u\twc s,w\twc s}(q)=Q_{M(u\twc s),M(w\twc
  s)}(q)=Q_{M(u)\twc s,M(w)\twc s}(q)=Q_{M(u),M(w)}(q),
\]
where the second equality (as is
the case in all subsequent computations of this kind throughout the
proof) follows from the inductive hypothesis and
the fact, provided by Proposition \ref{pr:SPMrestrict}, that $M$ is an SPM of $\iotint{e}{w\twc s}$. If $M(u)=u\twc s$, then 
\begin{eqnarray*}
	Q_{u,w}(q)&=&Q_{u\twc s,w\twc s}(q)\\
	&=&qQ_{M(u\twc s),M(w\twc s)}(q)+(q-1)Q_{u\twc s,M(w\twc s)}(q)\\
	&=&qQ_{M(u)\twc s,M(w)\twc s}(q)+(q-1)Q_{M(u),M(w)\twc s}(q)\\
	&=&Q_{M(u),M(w)}(q).
\end{eqnarray*}
Suppose now that $s\notin \DR(u)$. Since twisted conjugation by $s$
does not fix any element in $\iotint{e}{w}$, $M(u) \tl u \tl u\twc s$. We also
have $M(u) \tl M(u)\twc s = M(u\twc s) \tl u\twc s$ because
$\rho(u\twc s) = \rho(M(u))+2$. Therefore, 
\begin{eqnarray*}
	Q_{u,w}(q)&=&qQ_{u\twc s,w\twc s}(q)+(q-1)Q_{u,w\twc s}(q)\\
	&=&qQ_{M(u\twc s),M(w\twc s)}(q)+(q-1)Q_{M(u),M(w\twc s)}(q)\\
	&=&qQ_{M(u)\twc s,M(w)\twc s}(q)+(q-1)Q_{M(u),M(w)\twc s}(q)\\
	&=&Q_{M(u),M(w)}(q).
\end{eqnarray*}
\textbf{Case 2:} $M(u)\tr u$. We must prove that $Q_{u,w}(q)=qQ_{M(u),M(w)}(q)+(q-1)Q_{u,M(w)}(q)$.\newline 
First, assume that $s\in \DR(u)$. Then we have $u\twc s \tl u \tl
M(u)$, and therefore also $u\twc s\tl M(u\twc s) = M(u)\twc s \tl M(u)$. Hence, 
\begin{eqnarray*}
	Q_{u,w}(q)&=&Q_{u \twc s,w\twc s}(q)\\
	&=&qQ_{M(u\twc s),M(w\twc s)}(q)+(q-1)Q_{u\twc s,M(w\twc s)}(q)\\
	&=&qQ_{M(u)\twc s,M(w)\twc s}(q)+(q-1)Q_{u\twc s,M(w)\twc s}(q)\\
	&=&qQ_{M(u),M(w)}(q)+(q-1)Q_{u,M(w)}(q).\\
\end{eqnarray*}
Suppose now that $s\notin \DR(u)$. Again, this means $u\tl u\twc
s$. If $M(u)\neq u\twc s$, then $u\tl M(u) \tl M(u)\twc s=M(u\twc s)$.
Hence, $u\twc s \tl M(u\twc s)$. We obtain   
\begin{eqnarray*}
	Q_{u,w}(q)&=&qQ_{u\twc s,w\twc s}(q)+(q-1)Q_{u,w\twc s}(q)\\
	&=&q(qQ_{M(u\twc s),M(w\twc s)}(q)+(q-1)Q_{u\twc s,M(w\twc
          s)}(q))\\
        & & +(q-1)(qQ_{M(u),M(w\twc s)}(q) +(q-1)Q_{u,M(w\twc s)}(q))\\
	&=&q(qQ_{M(u)\twc s,M(w)\twc s}(q)+(q-1)Q_{M(u),M(w)\twc
          s}(q))\\
        & &+(q-1)(qQ_{u\twc s,M(w)\twc s}(q) +(q-1)Q_{u,M(w)\twc s}(q))\\
	&=&qQ_{M(u),M(w)}(q)+(q-1)Q_{u,M(w)}(q).\\
\end{eqnarray*}
Finally, if $M(u)=u\twc s$, then, 
\begin{eqnarray*}
	Q_{u,w}(q)&=&qQ_{u\twc s,w\twc s}(q)+(q-1)Q_{u,w\twc s}(q)\\
	&=&qQ_{M(u\twc s),M(w\twc s)}(q)+(q-1)(qQ_{M(u),M(w\twc
          s)}(q)+(q-1)Q_{u,M(w\twc s)}(q))\\
	&=&qQ_{M(u)\twc s,M(w)\twc s}(q)+(q-1)(qQ_{u\twc s,M(w)\twc
          s}(q)+(q-1)Q_{u,M(w)\twc s}(q))\\
	&=&qQ_{M(u),M(w)}(q)+(q-1)Q_{u,M(w)}(q).\\
\end{eqnarray*}
\end{proof}

Since $Q_{u,w}$ is determined by the SPMs of the intervals
$\iotint{e}{v}$ for $v\in \iotint{e}{w}$, this also holds for the KLV
$R$-polynomials and the KLV polynomials themselves. Since an SPM is a
poset invariant, the next corollary follows.

\begin{cor}
If $f:\iotint{e}{w}\rightarrow \iotint{e}{w'}$ is a poset isomorphism, then for all $v\in \iotint{e}{w}$, $Q_{v,w}= Q_{f(v),w'},$
$R_{v,w}= R_{f(v),w'}$ and 
$P^\iota_{v,w}= P^\iota_{f(v),w'}$.
\end{cor}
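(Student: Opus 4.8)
The plan is to bootstrap everything from Theorem \ref{res}, exploiting that special partial matchings are defined purely in terms of the poset. First I would record the easy structural facts about $f$: it sends the minimum $e$ to $e$ and the maximum $w$ to $w'$, it and its inverse turn covers into covers, and since $\iotint{e}{w}$ is a closed interval in the graded poset $\br(\iota)$ it preserves rank, so $\rho(f(v))=\rho(v)$ for all $v$ and in particular $\rho(w')=\rho(w)$. Because Definition \ref{de:SPM} mentions only $\prec$ and $\hat1$, for any SPM $M$ of $\iotint{e}{w}$ the conjugate $M':=f\circ M\circ f^{-1}$ is an SPM of $\iotint{e}{w'}$ with $M'\circ f=f\circ M$, and $f$ lines up the three cases $M(x)\tl x$, $M(x)=x$, $x\tl M(x)$ with those for $M'$. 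A point worth stressing is that, while $\iotint{e}{w}$ always carries a conjugation SPM (by Theorem \ref{th:conjmatch}, as $w\neq e$ in the nontrivial case), its conjugate $M'$ need not be one; this is exactly why Theorem \ref{res} is needed in the generality in which it was proved.

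Next I would prove $Q_{v,w}=Q_{f(v),w'}$ for all $v\in\iotint{e}{w}$ by induction on $\rho(w)$, the case $\rho(w)=0$ being trivial. For $\rho(w)\ge1$ I would fix $s\in\DR(w)$, take $M$ to be the conjugation SPM $x\mapsto x\twc s$ of $\iotint{e}{w}$, and set $M'=f\circ M\circ f^{-1}$. Since $M(w)=w\twc s\tl w$ we get $M'(w')=f(M(w))\tl w'$, so by Proposition \ref{pr:SPMrestrict} both $M$ and $M'$ restrict to SPMs of the strictly smaller intervals $\iotint{e}{M(w)}$ and $\iotint{e}{f(M(w))}$, between which $f$ restricts to an isomorphism. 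Applying Theorem \ref{res} to $M$ at $v$ and to $M'$ at $f(v)$ puts both computations in the same case of the recurrence (because $M'f=fM$ and $f$ respects covers and equalities), with right-hand sides assembled from $Q_{M(v),M(w)}$, $Q_{v,M(w)}$ on one side and from $Q_{f(M(v)),f(M(w))}=Q_{M'(f(v)),M'(w')}$, $Q_{f(v),f(M(w))}$ on the other. Each such pair of polynomials agrees: if both indices lie below $M(w)$ this is the inductive hypothesis for $f|_{\iotint{e}{M(w)}}$, and otherwise both are $0$, since $f$ preserves and reflects $\le$. Hence $Q_{v,w}=Q_{f(v),w'}$.

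From here the rest is routine. The identity $Q_{u,w}(q)=(-q)^{\rho(w)-\rho(u)}R_{u,w}(q^{-1})$, combined with rank preservation, immediately upgrades the $Q$-equality to $R_{v,w}=R_{f(v),w'}$. For the KLV polynomials I would run a downward induction on $\rho(w)-\rho(v)$: the case $v=w$ gives $P^\iota_{w,w}=1=P^\iota_{w',w'}$, and for $v<w$ the defining recurrence for $P^\iota$, together with the degree bound $\deg P^\iota_{v,w}\le(\rho(w)-\rho(v)-1)/2$, determines $P^\iota_{v,w}$ from the $P^\iota_{v',w}$ with $v<v'\le w$ (equal to $P^\iota_{f(v'),w'}$ by induction) and from the coefficients $Q_{v,v'}$ with $v\le v'\le w$ (equal to $Q_{f(v),f(v')}$ by applying the $Q$-part of the corollary to $f|_{\iotint{e}{v'}}$). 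Since $f$ carries $\iotint{v}{w}$ bijectively and rank-preservingly onto $\iotint{f(v)}{w'}$, the recurrence forces $P^\iota_{v,w}=P^\iota_{f(v),w'}$.

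I do not expect a genuine obstacle: the real difficulties — allowing an arbitrary SPM rather than a conjugation SPM in the $Q$-recurrence, and the structural analysis of non-conjugation SPMs — have all been dealt with in Theorem \ref{res} and the results preceding it. What remains is bookkeeping: transporting SPMs along $f$, organising the two nested inductions, and the convention that $Q_{a,b}=0$ (resp.\ $R_{a,b}=0$) when $a\not\le b$. The one spot that calls for a little care is checking that the restricted isomorphisms $f|_{\iotint{e}{M(w)}}$ and $f|_{\iotint{e}{v'}}$ are legitimately available to feed the inductive hypothesis, which they are because $M(w)$ and $v'$ both lie in $\iotint{e}{w}$.
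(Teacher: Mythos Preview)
Your proposal is correct and follows exactly the route the paper takes: the paper's proof is just the sentence preceding the corollary, namely that Theorem~\ref{res} shows $Q_{u,w}$ is determined by the SPMs of the intervals $\iotint{e}{v}$, and since SPMs are poset invariants the invariance of $Q$, $R$ and $P^\iota$ follows. Your argument is simply a careful unpacking of that sentence---transporting an SPM along $f$, running the induction on $\rho(w)$ for $Q$, and then reading off $R$ and $P^\iota$---so there is nothing to correct or contrast.
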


In particular, Theorem \ref{th:mainv1} is established.

\end{document}